\documentclass[a4paper,12pt]{amsart}

\usepackage{amsfonts,amssymb,amscd,amsmath,latexsym,amsbsy,enumerate,stmaryrd,a4wide,verbatim,bm}
\usepackage{color}
\usepackage{hyperref}

\theoremstyle{plain}
\newtheorem{theorem}{Theorem}[section]

\newtheorem{lemma}[theorem]{Lemma}

\theoremstyle{remark}
\newtheorem{remark}[theorem]{Remark}
\numberwithin{equation}{section}


\newcommand{\N}{\mathbb N}
\newcommand{\C}{\mathbb C}

\newcommand{\B}{\mathbb B}
\newcommand{\M}{\mathbb M}
\newcommand{\al}{\alpha}
\newcommand{\be}{\beta}

\newcommand{\si}{\sigma}

\newcommand{\La}{\Lambda}

\newcommand{\m}{\mathbf{m}}

\newcommand{\bn}{\mathbf{n}}
\newcommand{\bb}{\mathbf{b}}
\newcommand{\bc}{\mathbf{c}}
\newcommand{\bk}{\mathbf{k}}

\newcommand{\bp}{\mathbf{p}}
\newcommand{\bt}{\mathbf{t}}
\newcommand{\bv}{\mathbf{v}}

\newcommand{\bx}{\mathbf{x}}

\newcommand{\bz}{\mathbf{z}}

\newcommand{\cA}{\mathcal A}
\newcommand{\cB}{\mathcal B}

\newcommand{\te}{\tilde e}

\newcommand{\tensor}{\otimes}

\newcommand{\rFs}[5]{\,_{#1}F_{#2} \left( \genfrac{.}{.}{0pt}{}{#3}{#4}
	\ ;#5 \right)}

\newcommand{\su}{\mathfrak{su}}

\newcommand{\SU}{\mathrm{SU}}

\newcommand{\diag}[1]{\operatorname{diag}(#1)}
\newcommand*\diff{\mathop{}\!\mathrm{d}}
\newcommand{\dv}{\diff v_{\alpha}}

\begin{document}
	\title[Meixner polynomials related to holomorphic discrete series]{Multivariate Meixner polynomials related to holomorphic discrete series representations of $\SU(1,d)$}
	\author{Wolter Groenevelt}
	\address{Technische Universiteit Delft, DIAM, PO Box 5031,
		2600 GA Delft, the Netherlands}
	\email{w.g.m.groenevelt@tudelft.nl}
	\author{Joop Vermeulen}
	\address{Technische Universiteit Eindhoven, Department of Mathematics and Computer Science, PO Box 513, 5600 MB Eindhoven, the Netherlands} 
	\email{j.j.m.vermeulen@tue.nl}
	\maketitle

\begin{abstract}
	We show that Griffiths' multivariate Meixner polynomials occur as matrix coefficients of holomorphic discrete series representations of the group $\SU(1,d)$. Using this interpretation we derive several fundamental properties of the multivariate Meixner polynomials, such as orthogonality relations and difference equations. Furthermore, we also show that matrix coefficients for specific group elements lead to degenerate versions of the multivariate Meixner polynomials and their properties.
\end{abstract}

\section{Introduction}
In this paper we study Griffiths' \cite{Gr75} multivariate generalization of the Meixner polynomials. The (univariate) Meixner polynomials $M_n$, first studied by Meixner in \cite{Mei}, can be  defined through their generating function by
\[
\left(1-\tfrac{t}{c}\right)^x (1-t)^{-x-\beta} = \sum_{n=0}^\infty \frac{ (\be)_n }{n!} M_n(x;\be,c)t^n.
\]
These polynomials have many nice properties, e.g.~they are of hypergeometric type,
\[
M_n(x;\be,c) = \rFs{2}{1}{-n,-x}{\be}{1-\frac1c},
\]
and they are orthogonal with respect to the negative binomial distribution, 
\[
\sum_{x=0}^\infty M_n(x;\be,c) M_{n'}(x;\be,c) \frac{(\be)_x}{x!}c^x (1-c)^{\be} = 0, \qquad \text{if} \quad n \neq n',
\]
where we assume $\be>0$ and $c \in (0,1)$. Here we use standard notation for shifted factorials and hypergeometric functions, see e.g.~\cite{AAR}.

It is well known that the Meixner polynomials are related to the Lie group $\SU(1,1)$, which is the group of complex $2\times 2$-matrices $\left( 
\begin{smallmatrix} a & b \\ \overline{b} & \overline{a} \end{smallmatrix}
\right)$ with $|a|^2-|b|^2=1$. To be more precise, the Meixner polynomials occur as matrix coefficients of the holomorphic discrete series representations defined as follows. For a positive integer $\si$, the Bergman space $\mathcal A_\si$ is the Hilbert space of holomorphic functions on the complex unit disc $\mathbb D$ with inner product
\[
\langle f_1,f_2\rangle = \int_{\mathbb D} f_1(z)\overline{f_2(z)} (1-|z|^2)^\si \, dz.
\]
The holomorpic discrete series are the irreducible unitary representations on $\mathcal A_\si$ given by
\[
[\pi(g) f] (z) = \frac{1}{(a+\overline{b}z)^\si} f \left( \frac{b+\overline{a}z}{a+\overline{b}z} \right), \qquad g = \begin{pmatrix} a & b \\ \overline{b} & \overline{a} \end{pmatrix} \in \SU(1,1).
\]
The monomials $z^n$, $n=0,1,2,\ldots$, form an orthogonal basis for $\mathcal A_\si$. Let $\pi_{m,n}(g)$ be the coefficients in the expansion of $\pi(g)z^n$ in terms of this basis, i.e.
\[
(b+\overline{a}z)^n (a+\overline{b}z)^{-\si-n}= \sum_{m=0}^\infty \pi_{n,m}(g) z^m.
\] 
Comparing this to the generating function of the Meixner polynomials, it follows that the coefficients $\pi_{m,n}(g)$ are multiples of the Meixner polynomials $M_m(n;\si,|\tfrac{b}{a}|^2)$. 
From this representation theoretic interpretation of the Meixner polynomials several useful properties of the polynomials can easily be obtained: e.g.~orthogonality relations, the three-term recurrence relation and the second order difference equation.

In \cite{Gr75}, Griffiths introduced multivariate  Meixner polynomials as orthogonal polynomials with respect to the negative multinomial distribution. As such, they are closely related to the multivariate Krawtchouk polynomials \cite{Gr} which are orthogonal on a finite set with respect to the usual multinomial distribution. Iliev \cite{Il} showed that the multivariate Krawtchouk polynomials occur as matrix coefficients for finite dimensional representations of $\mathrm{SL}(d,\C)$. This leads, for example, to bispectrality of the Krawtchouk polynomials. In a follow-up paper \cite{Il12} it is shown,  without references to representation theory, that the multivariate Meixner polynomials have many properties that resemble those of the Krawtchouk polynomials. A representation theoretic interpretation of Griffiths' bivariate Meixner polynomials is obtained by Genest, Miki, Vinet and Zhedanov \cite{GMVZ} by showing they appear as matrix coefficients of $\mathrm{SO}(2,1)$ representations on oscillator states. In the same paper, it is also indicated how the general multivariate Meixner polynomials arise similarly in the representation theory of $\mathrm{SO}(d,1)$. Furthermore, in \cite{GGLV} it is shown that the bivariate Meixner polynomials occur as wave functions for a two-dimensional quantum oscillator. 

In the present paper we show an alternative way to study the multivariate Meixner polynomials by using holomorphic discrete series representations of $\SU(1,d)$, similar to the above described representation theoretic interpretation of the univariate Meixner polynomials. See also e.g.~\cite[Section 6.8]{VK} for the interpretation of the univariate Meixner polynomials in $\SU(1,1)$ representations. We expect that the results can be generalized to the more general case of holomorphic discrete series representations of $\SU(n,m)$ (for $n,m \geq 2$). This will be the topic of a future paper.

The organization of the paper is as follows. First in Section \ref{sec:SU(1,d)} we recall the holomorphic discrete series representation of $\SU(1,d)$. Then, in Section \ref{sec:Meixner polynomials} we recall Griffiths' definition of the multivariate Meixner polynomials and show that the matrix coefficients of the holomorphic discrete series corresponding to generic $g \in \SU(1,d)$ can be expressed in terms of these Meixner polynomials. This immediately leads to several properties, such as orthogonality with respect to the negative multinomial distribution, of the Meixner polynomials. In Section \ref{sec:Lie algebra}, we consider the corresponding Lie algebra representations and derive difference equations for the multivariate Meixner polynomials. Finally, in Section \ref{sec:degenerate Meixner} we consider degenerate versions of the multivariate Meixner polynomials, which correspond to the matrix coefficients for specific elements $g \in \SU(1,d)$.

\subsection{Notations}
For $\bx=(x_1,\ldots,x_d) \in \C^d$, we define
\[
\begin{gathered}
\|\bx\|=\sqrt{|x_1|^2+\ldots+|x_d|^2},\\
|\bx| = x_1+\ldots+x_d,\\
\overline{\bx} = (\overline{x_1},\ldots,\overline{x_d}).
\end{gathered}
\]
We often consider elements in $\C^d$ as column vectors, which will be clear from the context. 
For $\bn=(n_1,\ldots,n_d) \in \N_0^d$ and $\bx = (x_1,\ldots,x_d) \in \C^d$, we set
\[
\bn! = n_1!\cdots n_d!, \quad \bx^{\pm\bn} = x_1^{\pm n_1}\cdots x_d^{\pm n_d}.
\]
We denote by $\M_d$ the set of complex $d\times d$-matrices. For $A \in \M_d$, $A^t$ is the transpose of $A$ and $A^\dagger$ the conjugate transpose.

\section{The holomorphic discrete series representations of $\SU(1,d)$}  \label{sec:SU(1,d)}
$\mathrm{SL}(d+1;\C)$ is the group of complex $(d+1)\times (d+1)$-matrices of determinant 1. $\SU(1,d)$ is the subgroup of $\mathrm{SL}(d+1;\C)$ preserving the hermitian form associated with the matrix 
\[J = \diag{1,-1,-1,\dots,-1},\]
that is, a matrix $g \in \mathrm{SL}(d+1;\C)$ is in $\SU(1,d)$ if and only if the following equation holds:
\begin{equation}\label{eq:3:defining SU(1,d)}
	g^\dagger J g = J.
\end{equation}
Throughout this paper it will be convenient to write $g\in \SU(1,d)$ in the form 
\[
g = \begin{pmatrix} a & \mathbf{b}^t\\\mathbf{c} & D \end{pmatrix},
\]
where $a\in \C$, $\mathbf{b},\mathbf{c}\in\C^d$ and $D \in \M_d$. We will sometimes use notations such as $a=a(g)$ to stress the dependence on $g$. From the defining Equation \eqref{eq:3:defining SU(1,d)}, it follows that the inverse of the matrix $g \in \SU(1,d)$ is given by
\begin{equation} \label{eq:g inverse}
	g^{-1} = Jg^{\dagger}J = \begin{pmatrix} \overline{a} & -\mathbf{c}^{\dagger} \\ -\overline{\mathbf{b}} & D^{\dagger} \end{pmatrix},
\end{equation}
which implies identities such as 
\begin{equation} \label{eq:identities abcD}
\begin{gathered}
|a|^2-\|\bb\|^2=|a|^2-\|\bc\|^2=1, \\ D^\dagger D =I_d+ \overline{\bb}\bb^t, \quad D D^\dagger = I_d+\bc \bc^\dagger, \\ \overline{a}\bb^t=\bc^\dagger D, \qquad a\bc^\dagger = \bb^tD^\dagger,
\end{gathered}
\end{equation}
where $I_d$ is the identity matrix in $\M_d$. 

$\SU(1,d)$ has a family of representations called the holomorphic discrete series, on the weighted Bergman space $\cA_\alpha$ that we now introduce, see e.g.~\cite[Chapter VI]{Kn}. Let $\alpha>-1$. We define $\dv$ to be the weighted Lebesgue measure on the open unit ball $\B_d = \{\bz\in \C^d \,|\, \|\bz\|< 1\}$ given by 
\begin{equation}
	\dv = c_\alpha (1-\|\bz\|^2)^{\alpha}\diff v,
\end{equation}
with $\diff v$ the standard volume measure on $\B_d$ and $c_{\alpha}$ is the normalizing constant so that $v_{\alpha}(\B_d)=1$. A direct calculation shows that 
\[
c_\alpha =  \frac{(\alpha+1)_d}{d!}.
\]
The Bergman space $\cA_\alpha$ is the space of holomorphic functions in $L^{2}(\B_d,\dv)$. $\cA_\alpha$ is a Hilbert space with inner product
\[
	\langle f,g \rangle = \int_{\B_d}f(\bz)\overline{g(\bz)}\dv, \qquad f,g\in\cA_\alpha.
\]
An orthonormal basis for $\cA_\alpha$ is given by the monomials
\[
	e_\m(\bz) = \sqrt{\frac{(\alpha+d+1)_{|\m|}}{\m!}} \bz^\m, \qquad \m \in \N_0^d,
\]
see e.g.~Lemma 1.11 and Proposition 2.6 in \cite{Zhu}.

Now we are ready to define the representation of $\SU(1,d)$ we are interested in this paper. From here on, we assume $\alpha \in \N_0$ and we set
\[
\sigma = \alpha+d+1.
\]
Then, $\pi^\sigma$ given by
\begin{equation}\label{eq:3:rep definition}
	\pi^{\sigma}\begin{pmatrix} a & \mathbf{b}^t\\\mathbf{c} & D \end{pmatrix} f(\bz) 
	= \left(a+\bc^t\bz\right)^{-\sigma} f\left(\frac{\bb+D^t\bz}{{a+\bc^t\bz}}\right),
\end{equation}
defines a family of unitary representation of $\SU(1,d)$ on $\cA_\alpha$ labeled by $\sigma \in \N_{\geq d+1}$.

\section{Matrix coefficients and multivariate Meixner polynomials} \label{sec:Meixner polynomials}
In \cite{Gr75} and \cite{Il12}, the multivariate Meixner polynomials are defined through their generating function. The generating function is given by
\[
G(\bx,\bt,U,\beta) = \left(1 - \sum_{j=1}^d t_j\right)^{-\beta-|\bx|}
\prod_{i=1}^d\left(1 - \sum_{j=1}^d U_{i,j}t_j\right)^{x_i},\qquad \bx,\bt \in \C^d,
\]
where $\beta \in \C\setminus(-\N_0)$, $U=(U_{i,j}) \in \M_d$, and the principal branch of the power function is used. 
Then the multivariate Meixner polynomials $M_\bn(\bx;U,\beta)$ are defined by
\begin{equation} \label{eq:generating function}
G(\bx,\bt,U,\beta) = \sum_{\bn \in \N_0^d} \frac{(\be)_{|\bn|}}{\bn!} M_{\bn}(\bx;U,\beta) \bt^\bn,
\end{equation}
for $\bz$ sufficiently close to $0$. In \cite{Il12} conditions on the matrix $U$ are imposed, but just for the purpose of defining the multivariate Meixner polynomials these conditions are not needed. These multivariate Meixner polynomials have an explicit  expression as a Gelfand-Aomoto hypergeometric series given by
\begin{equation} \label{eq:hypergeometric Meixner}
M_{\bn}(\bx;U,\beta) = \sum_{(a_{i,j}) \in \M_d(\N_0)} \frac{ \prod_{j=1}^d (-n_j)_{\sum_{i=1}^d a_{i,j}} \prod_{i=1}^d (-x_i)_{\sum_{j=1}^d a_{i,j}} }{ (\beta)_{\sum_{i,j=1}^d a_{i,j} } } \prod_{i,j=1}^d \frac{ (1-U_{i,j})^{a_{i,j}} }{a_{i,j}!},
\end{equation}
where $\M_d(\N_0)$ denotes the subset of $\M_d$ consisting of matrices with entries in $\N_0$.

We will show that the multivariate Meixner polynomials occur as matrix coefficients for the holomorphic discrete series representation of $\SU(1,d)$. In this interpretation, the parameter matrix $U$ depends on a $g \in \SU(1,d)$, which implies conditions for $U$ that are closely related to the conditions imposed in \cite{Il12}, see the discussion at the end of this section. For $g \in \SU(1,d)$, the function $\pi^\sigma(g) e_{\m}$ is holomorphic on $\B_d$, hence it must equal its Taylor series which we can consider as the expansion in the basis $\{e_\m \mid \m \in \N_0^d\}$. We consider the corresponding matrix coefficients $\pi_{\m,\bn}^\sigma(g)$ which are determined by
\begin{equation} \label{eq:matrix coefficients}
	\pi^\sigma(g)e_{\bn}(\bz) = \sum_{ \m \in \N_0^d} \pi^{\sigma}_{\m,\bn}(g)e_{\m}(\bz), \qquad \bz \in \B_d,
\end{equation}
or equivalently
\begin{equation} \label{eq:matrix coefficients inner product}
\pi_{\m,\bn}^\sigma(g) = \langle \pi^\sigma(g)e_\bn,e_\m \rangle.
\end{equation}
\begin{theorem} \label{thm:pi(g)=Meixner pol}
	Let $g = \begin{pmatrix} a & \mathbf{b}^t\\\mathbf{c} & D \end{pmatrix} \in \SU(1,d)$ with $a,b_i,c_i\neq 0$ for $i=1,\ldots,d$, then
	\[
	\pi^\sigma_{\m,\bn}(g) = \sqrt{\frac{(\si)_{|\m|} (\si)_{|\bn|}}{\m!\,\bn!} } (-1)^{|\m|} a^{-\sigma} \tilde \bp^\m \bp^{\bn}M_{\m}(\bn;U,\sigma)
	\]
	with $\bp=\bp(g)=(p_1,\ldots,p_d)$, $\tilde \bp=\tilde \bp(g)=(\tilde p_1,\ldots,\tilde p_d)$ and $U=U(g)=(U_{i,j})$ given by  
	\[
	p_i = \frac{b_i}{a}, \quad \tilde p_i = \frac{c_i}{a},\quad  U_{i,j}=\frac{a D_{j,i}}{b_ic_j}.
	\]
\end{theorem}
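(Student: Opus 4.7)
The plan is to compute $\pi^\sigma(g)e_\bn(\bz)$ directly from the representation formula \eqref{eq:3:rep definition} and recognize the resulting holomorphic function as a specialization of Griffiths' generating function \eqref{eq:generating function}. Since $\pi^\sigma(g)e_\bn \in \cA_\alpha$ is holomorphic on $\B_d$, it equals its Taylor series there, so once I match the coefficient of $\bz^\m$ I read off $\pi^\sigma_{\m,\bn}(g)$ via \eqref{eq:matrix coefficients}.

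Concretely, I would first substitute $f=e_\bn$ in \eqref{eq:3:rep definition} and factor out $a$ from $a+\bc^t\bz$ and $b_i$ from each factor $b_i+(D^t\bz)_i = b_i + \sum_j D_{j,i}z_j$, obtaining
\[
\pi^\sigma(g)e_\bn(\bz) = \sqrt{\tfrac{(\sigma)_{|\bn|}}{\bn!}}\, a^{-\sigma}\bp^\bn \left(1+\sum_{j=1}^d \tilde p_j z_j\right)^{-\sigma-|\bn|}\prod_{i=1}^d\left(1+\sum_{j=1}^d\tfrac{D_{j,i}}{b_i}z_j\right)^{n_i}.
\]
Then I substitute $t_j=-\tilde p_j z_j$ into \eqref{eq:generating function} with $\bx=\bn$ and $\beta=\sigma$. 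The identity $1-\sum_j t_j = 1+\sum_j\tilde p_j z_j$ is immediate, while matching $1-\sum_j U_{i,j}t_j = 1+\sum_j (D_{j,i}/b_i)z_j$ forces $U_{i,j}\tilde p_j = D_{j,i}/b_i$, i.e.\ $U_{i,j} = aD_{j,i}/(b_ic_j)$—precisely the matrix of the theorem. Thus the two power-factor products coincide, and expanding via \eqref{eq:generating function} yields
\[
\pi^\sigma(g)e_\bn(\bz) = \sqrt{\tfrac{(\sigma)_{|\bn|}}{\bn!}}\, a^{-\sigma}\bp^\bn \sum_{\m\in\N_0^d}\frac{(\sigma)_{|\m|}}{\m!}(-1)^{|\m|}\tilde\bp^\m M_\m(\bn;U,\sigma)\,\bz^\m.
\]
Comparing the coefficient of $\bz^\m$ on both sides with that in $\sum_\m \pi^\sigma_{\m,\bn}(g)e_\m(\bz)$, where $e_\m(\bz)=\sqrt{(\sigma)_{|\m|}/\m!}\,\bz^\m$, and solving for $\pi^\sigma_{\m,\bn}(g)$ gives the stated formula.

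Conceptually the proof is a one-line matching of two generating functions, so the work is bookkeeping. The main points to watch are the transpose in $D^t\bz$ (so that $D_{j,i}$ and not $D_{i,j}$ appears in $U$), the sign $(-1)^{|\m|}$ arising from the substitution $\bt=-\tilde\bp\bz$, and the square-root normalization factors coming from the $e_\m$'s. Convergence is not an issue: for $\bz$ near $0$ the substitution $\bt=-\tilde\bp\bz$ lies in the domain of convergence of the power series \eqref{eq:generating function}, and once the Taylor coefficients are identified the resulting formula for $\pi^\sigma_{\m,\bn}(g)$ is determined for all $\m$ by uniqueness of Taylor expansions of the holomorphic function $\pi^\sigma(g)e_\bn$ on $\B_d$.
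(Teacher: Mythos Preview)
Your proof is correct and follows essentially the same approach as the paper: both compute $\pi^\sigma(g)$ applied to a basis monomial, factor out $a$ and the $b_i$'s, and recognize the resulting expression as the Meixner generating function \eqref{eq:generating function} after the substitution $t_j=-\tilde p_j z_j=-c_jz_j/a$, then read off the Taylor coefficients. The only cosmetic difference is that the paper works with the unnormalized monomials $\bz^\bn$ and inserts the $\sqrt{(\sigma)_{|\bn|}/\bn!}$ factors at the end, whereas you carry the normalized $e_\bn$ throughout.
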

Before we prove the theorem let us remark that the identities
\[
|a|^2 - \|\bb\|^2 = |a|^2 - \|\bc\|^2 = 1,
\]
imply
\[
|a|^2 = \frac{1}{1-\sum_{i=1}^d|p_i|^2} = \frac1{1-\sum_{i=1}^d|\tilde p_i|^2}.
\]
It is convenient to write $|a|^{-2} = |p_0|^2=|\tilde p_0|^2$, so that
\[
\sum_{i=0}^d |p_i|^2 = \sum_{i=0}^d |\tilde p_i|^2= 1.
\]
\begin{proof}
Assume $g$ is as given in the theorem, then we can write out the left-hand side of \eqref{eq:matrix coefficients} as follows:
\begin{align*}
	\pi^\sigma(g)\bz^{\bn} &= \left(a + \sum_{j=1}^dc_j z_j\right)^{-\sigma} 
	\prod_{i=1}^d\left(\frac{b_i + \sum_{j=1}^d D_{j,i}z_j}{a+\sum_{j=1}^d c_j z_j}\right)^{n_i}\\
	&= a^{-\sigma-|\bn|}\bb^{\bn} \left(1 - \sum_{i=1}^d\frac{-c_iz_i}{a}\right)^{-\sigma-|\bn|}
	\prod_{i=1}^d\left(1 - \sum_{j=1}^d \frac{a D_{j,i}}{b_i c_j}\left(\frac{-c_jz_j}{a}\right)\right)^{n_i}.
\end{align*}
Comparing with the generating function for Meixner polynomials \eqref{eq:generating function} we see that
\begin{equation*}
	\pi^\sigma(g)\bz^{\bn} = \sum_{\m \in \N_0^d } \frac{(\sigma)_{|\m|}}{\m!}a^{-\sigma-|\m|-|\bn|}\bb^{\bn}(-\bc)^{\m}M_{\m}(\bn;U,\sigma)\bz^{\m},
\end{equation*}
where the parameter matrix $U$ is given by $U_{ij}=\frac{aD_{ji}}{b_ic_j}$. From this the result follows.
\end{proof}
From Theorem \ref{thm:pi(g)=Meixner pol} we immediately obtain several properties of the multivariate Meixner polynomials: 
\begin{theorem} \label{thm:properties Meixner}
	The Meixner polynomials $M_\m(\bn;U,\si)$ from Theorem \ref{thm:pi(g)=Meixner pol} have the following properties.
	\begin{enumerate}[(i)]
	\item  Orthogonality relations:
	\[
	\sum_{\bn\in\N_0^d} \frac{(\sigma)_{|\bn|}}{\bn!}\bp^{\bn}\overline{\bp}^\bn M_{\m}(\bn;U,\sigma) \overline{M_{\m'}(\bn;U,\sigma)} = \delta_{\m,\m'}\frac{\m!\tilde \bp^{-\m} \overline{\tilde \bp}^{-\m}}{(\sigma)_{|\m|}|p_0|^{2\sigma}},
	\]
	\[
	\sum_{\m\in\N_0^d} \frac{(\sigma)_{|\m|}}{\m!}\tilde \bp^{\m}\overline{\tilde \bp}^\m M_{\m}(\bn;U,\sigma) \overline{M_{\m}(\bn';U,\sigma)} = \delta_{\bn,\bn'}\frac{\bn!\bp^{-\bn} \overline{\bp}^{-\bn}}{(\sigma)_{|\bn|}|p_0|^{2\sigma}}.
	\]
		\item Integral representation:
		\[
		\begin{split}
			(-1)^{|\m|}& \tilde\bp^\m M_\m(\bn;U,\sigma) =\\
			& \frac{(\sigma -d)_d}{d!} \int_{\B_d} \left(1 + \sum_{i=1}^d\tilde p_i z_i\right)^{-\sigma-|\bn|}
			\prod_{i=1}^d\left(1 + \sum_{j=1}^d U_{i,j}\tilde p_j z_j \right)^{n_i}\overline{\bz}^\m (1-\|\bz\|^2)^{\sigma - d-1}\, dv.
		\end{split}
		\]
		\item Duality: $M_\m(\bn;U,\si) =  M_\bn(\m;U^t,\si)$.
		\item[(iv)] Sum identity: 
		\[
		\begin{split}
		\left( \frac{a(g_1) a(g_2)}{a(g_1g_2)}\right)^\si &\tilde \bp(g_1g_2)^\m \tilde \bp(g_1)^{-\m}  \bp(g_1g_2)^\bn \bp(g_2)^{-\bn} M_\m(\bn;U(g_1g_2),\si) = \\
		&\sum_{\bk \in \N_0^d} (-1)^{|\bk|}  \bp(g_1)^\bk \tilde\bp(g_2)^{\bk} \frac{ (\si)_{|\bk|}}{\bk!} M_\m(\bk;U(g_1),\si) M_\bk(\bn;U(g_2),\sigma), 
		\end{split}
		\]
		where $g_1,g_2 \in \SU(1,d)$ such that $g_1,g_2,g_1g_2$ satisfy conditions as in Theorem \ref{thm:pi(g)=Meixner pol}.
	\end{enumerate}
\end{theorem}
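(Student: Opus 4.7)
The plan is to derive each of the four statements by reading off a structural property of the representation $\pi^\sigma$ through the matrix-coefficient identification of Theorem \ref{thm:pi(g)=Meixner pol}. In every case, substituting
\[
\pi_{\m,\bn}^\sigma(g) = \sqrt{\tfrac{(\si)_{|\m|}(\si)_{|\bn|}}{\m!\,\bn!}}\,(-1)^{|\m|}a^{-\sigma}\tilde\bp^\m \bp^\bn M_\m(\bn;U,\sigma)
\]
into the representation-theoretic identity and cancelling the common square-root and sign prefactors should isolate exactly the Meixner-polynomial identity to be proved.

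For (i), I would use unitarity of $\pi^\sigma(g)$: orthonormality of the rows and columns of its matrix in the basis $\{e_\m\}$ gives $\sum_\bn \pi_{\m,\bn}^\sigma(g)\overline{\pi_{\m',\bn}^\sigma(g)} = \delta_{\m,\m'}$ and $\sum_\m \pi_{\m,\bn}^\sigma(g)\overline{\pi_{\m,\bn'}^\sigma(g)} = \delta_{\bn,\bn'}$. Substituting Theorem \ref{thm:pi(g)=Meixner pol}, pulling the factors independent of the summation index outside, and rewriting $|a|^{-2}=|p_0|^2$ yields the two orthogonality relations. For (ii), I would evaluate $\pi_{\m,\bn}^\sigma(g) = \langle \pi^\sigma(g)e_\bn,e_\m\rangle$ directly from the definition of the inner product on $\cA_\alpha$. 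The computation already carried out in the proof of Theorem \ref{thm:pi(g)=Meixner pol} shows that $\pi^\sigma(g)\bz^\bn$ equals $a^{-\si-|\bn|}\bb^\bn$ times
\[
K(\bz) = \Bigl(1+\sum_{i=1}^d \tilde p_i z_i\Bigr)^{-\si-|\bn|}\prod_{i=1}^d\Bigl(1+\sum_{j=1}^d U_{i,j}\tilde p_j z_j\Bigr)^{n_i},
\]
whose Taylor expansion has coefficients proportional to $(-\tilde\bp)^\m M_\m(\bn;U,\si)$. Integrating $K(\bz)\overline{\bz}^\m$ against $\dv$ and using $\langle \bz^{\m'},\bz^\m\rangle = \delta_{\m,\m'}\m!/(\si)_{|\m|}$ together with $c_\alpha = (\si-d)_d/d!$, $\alpha=\si-d-1$, singles out that coefficient and gives (ii).

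For (iv), the homomorphism property $\pi^\sigma(g_1g_2) = \pi^\sigma(g_1)\pi^\sigma(g_2)$ translates into $\pi_{\m,\bn}^\sigma(g_1g_2) = \sum_\bk \pi_{\m,\bk}^\sigma(g_1)\pi_{\bk,\bn}^\sigma(g_2)$. Substituting Theorem \ref{thm:pi(g)=Meixner pol} everywhere, the factors $\sqrt{(\si)_{|\m|}(\si)_{|\bn|}/\m!\,\bn!}$ and $(-1)^{|\m|}$ cancel on both sides, the two copies of $\sqrt{(\si)_{|\bk|}/\bk!}$ combine to $(\si)_{|\bk|}/\bk!$ inside the sum, and regrouping the $a$, $\bp$, $\tilde\bp$ prefactors yields the stated identity. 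For (iii), the unitarity identity $\pi^\sigma(g^{-1})=\pi^\sigma(g)^*$ reads $\pi_{\m,\bn}^\sigma(g^{-1}) = \overline{\pi_{\bn,\m}^\sigma(g)}$. From \eqref{eq:g inverse} I would compute $a(g^{-1}) = \overline{a}$, $\bb(g^{-1}) = -\overline{\bc}$, $\bc(g^{-1}) = -\overline{\bb}$, $D_{j,i}(g^{-1}) = \overline{D_{i,j}}$, whence $\bp(g^{-1}) = -\overline{\tilde\bp}$, $\tilde\bp(g^{-1}) = -\overline{\bp}$ and $U_{i,j}(g^{-1}) = \overline{U_{j,i}}$. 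Substituting Theorem \ref{thm:pi(g)=Meixner pol} on both sides and cancelling reduces the identity to $\overline{M_\bn(\m;U,\si)} = M_\m(\bn;\overline{U}^t,\si)$; since \eqref{eq:hypergeometric Meixner} exhibits $M_\m(\bn;U,\si)$ as a polynomial in the entries of $U$ with real Pochhammer-symbol coefficients, conjugation on the left only replaces $U$ by $\overline{U}$, and the duality $M_\m(\bn;U,\si)=M_\bn(\m;U^t,\si)$ follows after renaming $\overline{U}\mapsto U$.

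The only real obstacle is bookkeeping: in each part several Pochhammer symbols, signs, square roots and quotients of $a,\bp,\tilde\bp$ must line up simultaneously, and in (iii) an accurate determination of $a(g^{-1}),\bb(g^{-1}),\bc(g^{-1}),D(g^{-1})$ from \eqref{eq:g inverse} is essential. Beyond Theorem \ref{thm:pi(g)=Meixner pol} and the standard Bergman-space reproducing relation, no further input is needed.
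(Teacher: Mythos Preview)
Your proposal is correct and follows essentially the same route as the paper: each item is obtained by translating a structural property of $\pi^\sigma$ (unitarity, the inner-product definition of matrix coefficients, the homomorphism property, and the relation $\pi_{\m,\bn}^\sigma(g)=\overline{\pi_{\bn,\m}^\sigma(g^{-1})}$ combined with \eqref{eq:g inverse}) through Theorem \ref{thm:pi(g)=Meixner pol}. The only cosmetic difference is that the paper derives the second orthogonality relation from the first one together with the duality (iii), whereas you obtain both directly from row and column orthonormality of the unitary matrix $(\pi_{\m,\bn}^\sigma(g))$; your handling of (iii), making explicit the step $\overline{M_\bn(\m;U,\si)}=M_\bn(\m;\overline{U},\si)$ via the real Pochhammer coefficients in \eqref{eq:hypergeometric Meixner}, spells out what the paper leaves implicit.
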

\begin{proof}
	The integral representation follows directly from Theorem \ref{thm:pi(g)=Meixner pol} and \eqref{eq:matrix coefficients inner product}. The duality property follows from unitarity of $\pi^\si$, which implies that $\pi_{\m,\bn}^\sigma(g)=\overline{\pi_{\bn,\m}^\si(g^{-1})}$, and from $g^{-1} = (\begin{smallmatrix} \overline{a}& -\bc^\dagger \\ -\overline{\bb}& D^\dagger \end{smallmatrix} )$, see \eqref{eq:g inverse}. 
	The identity $\pi^\si(g_1)\pi^\si(g_2)=\pi^\si(g_1g_2)$ leads to
	\[
	\pi_{\m,\bn}^\sigma(g_1g_2) = \sum_{\bk \in \N_0^d} \pi_{\m,\bk}^\sigma(g_1) \pi_{\bk,\bn}^\sigma(g_2).
	\]
	Writing this in terms of the Meixner polynomials gives the sum identity. Taking $g_2^{-1}=g_1=g$ and using $\pi_{\m,\bn}^\sigma(g)=\overline{\pi_{\bn,\m}^\si(g^{-1})}$ we obtain the orthogonality relation
	\[
		\delta_{\m,\bn} =\sum_{\bk\in\N_d} \pi^\si_{\m,\bk}(g) \overline{\pi^\si_{\bn,\bk}(g)}.
	\]
	The first orthogonality relation now follows from Theorem \ref{thm:pi(g)=Meixner pol}, and the second orthogonality relation follows from the first one and the duality property.
\end{proof}

\begin{remark}
	Since the monomials $\{e_\bn\}$ form an orthonormal basis for $\cA_\al$, it follows from Parseval's identity that the set $\{\pi_{\m,\cdot}(g) \mid \m \in \N_0^d\}$ is an orthonormal basis for $\ell^2(\N_0^d)$. As a consequence, the set of Meixner polynomials $\{ M_\m(\,\cdot\,;U,\si) \mid \m \in \N_0^d\}$ is an orthogonal basis for the weighted $L^2$-space $\ell^2(\N_0^d; \frac{(\sigma)_{|\bk|}}{\bk!}\bp^{\bk}\overline{\bp}^\bk)$. This gives another proof of Griffiths' \cite{Gr75} result on completeness of the Meixner polynomials.  
\end{remark}
Next, we obtain an identity for the Meixner polynomials which corresponds to the fact that the tensor product of several representations $\pi^\si$ contains a specific $\pi^{\si'}$ as a subrepresentation. The case $N=2$ in the following theorem corresponds to the Runge-type identity in \cite{Gr75}.
\begin{theorem}
	Let $N \in \N_{\geq 2}$, $\si_1,\ldots,\si_N \in \N_{\geq d+1}$ and $\si=\sum_{i=1}^N\si_i$. Define the linear map $\La:\cA_{\si_1-d-1} \tensor \cdots \tensor \cA_{\si_N-d-1} \to \cA_{\si-d-1}$ on basis elements by
	\[
	\La(e_{\m_1} \tensor \cdots \tensor e_{\m_N}) = C_{\m_1,\ldots,\m_N} e_{\m_1+\ldots+\m_N}, \qquad \m_1,\ldots,\m_N \in \N_0^d,
	\]
	with
	\[
	C_{\m_1,\ldots,\m_N}= \sqrt{ \frac{(\m_1+\ldots+\m_N)!}{ (\si)_{|\m_1+\ldots+\m_N|}}\prod_{i=1}^N \frac{ (\sigma_i)_{|\m_i|} }{ \m_i! } },
	\]
	then $\La$ intertwines $\pi^{\si_1} \tensor \cdots \tensor \pi^{\si_N}$ with $\pi^{\si}$. As a consequence, the Meixner polynomials satisfy
	\[
	\frac{(\si)_{|\m|}}{\m!} M_\m(\bn;U,\si) = \displaystyle \sum_{\substack{\m_1,\ldots,\m_N \in \N_0^d \\ \m_1+\ldots+\m_N = \m}} \prod_{i=1}^N \frac{ (\si_i)_{|\m_i|}}{\m_i!}M_{\m_i}(\bn_i;U,\si_i),
	\]
	where  $\m,\bn_1,\ldots,\bn_N \in \N_0^d$ and $\bn = \sum_{i=1}^N \bn_i$. 
\end{theorem}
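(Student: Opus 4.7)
The plan is to identify the map $\La$ in very concrete terms and then to read off both the intertwining property and the Meixner identity with essentially no further work. The key observation is that $\La$ is nothing but the pointwise multiplication of holomorphic functions. Indeed, the constant $C_{\m_1,\ldots,\m_N}$ is chosen precisely to cancel the $\sqrt{(\si)_{|\m|}/\m!}$ sitting inside $e_{\m_1+\ldots+\m_N}(\bz)=\sqrt{(\si)_{|\m|}/\m!}\,\bz^\m$ and to rebuild the factors $\prod_i\sqrt{(\si_i)_{|\m_i|}/\m_i!}$; a short computation then gives
\[
\La(e_{\m_1}\tensor\cdots\tensor e_{\m_N})(\bz)=e_{\m_1}(\bz)\cdot e_{\m_2}(\bz)\cdots e_{\m_N}(\bz),
\]
and by linearity $\La(f_1\tensor\cdots\tensor f_N)(\bz)=\prod_{i=1}^N f_i(\bz)$ on the algebraic span of the basis.

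With this identification the intertwining is immediate from the defining formula \eqref{eq:3:rep definition}: the Möbius argument $(\bb+D^t\bz)/(a+\bc^t\bz)$ does not depend on $\si$, and the cocycles $(a+\bc^t\bz)^{-\si_i}$ multiply to $(a+\bc^t\bz)^{-\si}$ since $\si=\sum_i\si_i$. Hence
\[
\begin{split}
\La\bigl((\pi^{\si_1}(g)\tensor\cdots\tensor\pi^{\si_N}(g))(f_1\tensor\cdots\tensor f_N)\bigr)(\bz)
&=\prod_{i=1}^N[\pi^{\si_i}(g)f_i](\bz)\\
&=[\pi^\si(g)\La(f_1\tensor\cdots\tensor f_N)](\bz).
\end{split}
\]

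For the Meixner identity I apply this intertwining to $e_{\bn_1}\tensor\cdots\tensor e_{\bn_N}$ and expand both sides in the orthonormal basis $\{e_\m\}$ of $\cA_{\si-d-1}$. Equating coefficients of $e_\m$ gives
\[
C_{\bn_1,\ldots,\bn_N}\,\pi^\si_{\m,\bn}(g)=\sum_{\substack{\m_1,\ldots,\m_N\in\N_0^d\\ \m_1+\ldots+\m_N=\m}}C_{\m_1,\ldots,\m_N}\prod_{i=1}^N\pi^{\si_i}_{\m_i,\bn_i}(g),
\]
with $\bn=\sum_i\bn_i$. Substituting Theorem \ref{thm:pi(g)=Meixner pol} and using that $\bp$, $\tilde\bp$ and $U$ depend only on $g$, the common factor $(-1)^{|\m|}a^{-\si}\tilde\bp^\m\bp^\bn$ cancels on both sides (using $\sum_i\si_i=\si$, $\sum_i\m_i=\m$, $\sum_i\bn_i=\bn$); the remaining Pochhammer and factorial square roots combine with the definition of $C_{\cdot,\ldots,\cdot}$ to collapse exactly to the stated identity with $(\si)_{|\m|}/\m!$ on the left and $\prod_i (\si_i)_{|\m_i|}/\m_i!$ on the right. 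The only real work is the bookkeeping of square-root factors; since the computation is purely algebraic with every ingredient explicit, I do not expect any genuine obstacle beyond organising the notation.
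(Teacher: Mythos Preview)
Your proposal is correct and follows essentially the same route as the paper. The paper also rests on the observation that $\La$ is pointwise multiplication, phrased there as $\prod_{i=1}^N e_{\m_i}^{\si_i}(\bz)=C_{\m_1,\ldots,\m_N}e_\m^\si(\bz)$, and then uses the multiplicativity of the cocycle $(a+\bc^t\bz)^{-\si_i}$ (stated as an ``obvious identity'') to obtain the intertwining, after which the matrix-coefficient identity and the Meixner identity follow exactly as you describe.
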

\begin{proof}
	We write $e_\m^\si$ for a basis vector of $\cA_{\si-d-1}$. Using $\bz^{\m_1} \cdots \bz^{\m_N} = \bz^\m$, with $\m=\m_1+\ldots+\m_N$, we have
	\[
	\prod_{i=1}^N e_{\m_i}^{\si_i}(\bz) = C_{\m_1,\ldots,\m_N} e_{\m}^\si(\bz).
	\]
	Let $\bn_1,\ldots,\bn_N \in \N_0^d$ such that $\sum_{i=1}^N \bn_i = \bn$. Then, using the expansion $\pi^{\si_i}(g)e_{\bn_i} = \sum_{\m_i} \pi^{\si_i}_{\m_i,\bn_i}(g)e_{\m_i}$, we find
	\[
	\begin{split}
	\La\Big( \pi^{\si_1}(g)e_{\bn_1}^{\si_1} & \tensor \cdots \tensor \pi^{\si_N}(g) e_{\bn_N}^{\si_N} \Big) (\bz) \\
	 & =  \sum_{\m_1,\ldots,\m_N \in \N_0^d}  \pi_{\m_1,\bn_1}^{\si_1}(g) \cdots \pi_{\m_N,\bn_N}^{\si_N}(g)  C_{\m_1,\ldots,\m_N}	 e_{\m_1+\ldots+\m_N}^\si(\bz) \\
	 & = \prod_{i=1}^N \sum_{\m_i\in \N_0^d} \pi_{\m_i,\bn_i}^{\si_i}(g) e_{\m_i}^{\si_i}(\bz) \\
	 & = \prod_{i=1}^N \pi^{\si_i}(g) e_{\bn_i}(\bz) \\
	 & = C_{\bn_1,\ldots,\bn_N} \pi^\si(g) e_\bn^\si(\bz),
	\end{split}
	\] 
	where the last step follows from the obvious identity
	\[
	\begin{split}
		\prod_{i=1}^N  (a+\sum_l c_l z_l)^{-\si_i-|\bn_i|} & \prod_{k}(b_k +\sum_l D_{l,k} z_l)^{n_{i,k}} = \\
		&(a+\sum_l c_l z_l)^{-\si-|\bn|} \prod_k (b_k + \sum_l D_{l,k} z_l)^{\bn_{k}}.
	\end{split}
	\]
	This proves the intertwining property of $\La$. Taking the inner product with $e_\m$ shows that
	\[
	\sum_{\substack{\m_1,\ldots,\m_N \in \N_0^d \\ \m_1+\ldots+\m_N = \m}} C_{\m_1,\ldots,\m_N} \pi_{\m_1,\bn_1}^{\si_1}(g) \cdots \pi_{\m_N,\bn_N}^{\si_N}(g) = C_{\bn_1,\ldots,\bn_N} \pi_{\m,\bn}^\si(g),
	\]
	and then the stated identity for Meixner polynomials follows from Theorem \ref{thm:pi(g)=Meixner pol}.
\end{proof}

We conclude this section by comparing our multivariate Meixner polynomials by the ones studied by Iliev in \cite{Il12}.
In \cite{Il12}, the Meixner polynomials depend on a parameter $\beta$ and on the following parameters:
$\bc=(c_1,\ldots,c_d),\tilde\bc=(\tilde c_1,\ldots,\tilde c_d) \in \C^d$ and $U=(U_{ij}) \in \M_d$. These parameters relate to each other as follows: let $C=\diag{1,-\bc}, \widetilde C= \diag{1,-\tilde\bc}$, $\hat U= (\begin{smallmatrix} 1 & \mathbf{1}^t \\ \mathbf{1}& U \end{smallmatrix}) \in \M_{d+1}$ with $\mathbf 1$ the vector in $\C^d$ with every entry equal to 1, then
\begin{equation} \label{eq:parameter conditions}
	\hat U^t C \hat U \widetilde C= c_0 I_{d+1},
\end{equation}
with $c_0=1-|\bc|$. 
This is slightly different from the conditions on the parameters we use in this paper. 
In our approach, the matrix $U$ together with the matrices $C :=\diag{1,-|p_1|^2,\ldots,-|p_d|^2}$ and $\widetilde C:=\diag{1,-|\tilde p_1|^2,\ldots,-|\tilde p_d|^2}$ are obtained from a matrix $g = \left(\begin{smallmatrix} a& \bb^t\\ \bc & D\end{smallmatrix}\right)\in \SU(1,d)$ as follows: $g = a \widetilde P \hat U^t P$ with $P=\diag{1,\bp} = \diag{1,\frac{\bb}{a}}$, $\widetilde P = \diag{1,\tilde \bp}=\diag{1,\frac{\bc}{a}}$ and $\hat U= (\begin{smallmatrix} 1 & \mathbf{1}^t \\ \mathbf{1}& U \end{smallmatrix})$ with $U_{i,j} = \frac{aD_{j,i}}{b_ic_j}$. Then $C = P^\dagger J P$ and $\widetilde C = \widetilde P^\dagger J \widetilde P$, and the condition $g^\dagger J g= J$ is equivalent to
\[
		\hat U^{\dagger} C \hat U \widetilde C = |p_0|^2I_{d+1},
\]
with $|p_0|^2=|a|^{-2} = 1-\sum_i |p_i|^2$. In particular, the only difference with condition \eqref{eq:parameter conditions} is the use of the complex transpose of $\hat U$ instead of just the transpose. Because of this small difference, the orthogonality relations obtained in this paper, and also the difference equations in the next section, are very similar to the results from \cite{Il12}, but they are only the same in case all parameters are real-valued.

\section{The Lie algebra $\su(1,d)$ and multivariate Meixner polynomials} \label{sec:Lie algebra}
The Lie algebra of $\SU(1,d)$ consists of matrices $X \in \M_{d+1}$ with trace zero such that $X^{\dagger}J = -JX$, where (recall) $J=\diag{1,-1,\dots,-1}$. We denote by $\su(1,d)$ the complexification of the Lie algebra of $\SU(1,d)$, i.e.~$\mathfrak{sl}(d+1,\C)$, equipped with the $*$-structure defined by
\[
X^* = JX^\dagger J.
\]
A basis of $\su(1,d)$ is given by 
\begin{equation} \label{eq:basis su(1,d)}
\cB= \Big\{E_{i,j} \mid i,j=0,\ldots,d,\ i \neq j\Big\}\cup \Big\{H_i = E_{i,i} - \frac{1}{d+1}I_{d+1} \mid i=1,\ldots,d\Big\},
\end{equation}
where $E_{i,j}$ denotes the matrix unit with $(i,j)$-entry $1$ and all other entries $0$. Note that
\begin{equation} \label{eq:*-structure}
\begin{aligned}
H_i^*&=H_i, && i=1,\ldots,d,\\
E_{i,0}^*&= - E_{0,i}, &&i=1,\ldots,d,\\
E_{i,j}^*&=E_{j,i}, &&1 \leq i,j\leq d,\ i\neq j.
\end{aligned}
\end{equation}
The unitary representation $\pi^\sigma$ of $\SU(1,d)$ on $\cA_\alpha$ gives rise to an unbounded $*$-representation of $\su(1,d)$ on $\cA_\alpha$ that we also denote by $\pi^\sigma$. As a dense domain we choose the set of polynomials on $\B_d$. On the basis of monomials $\{e_\bn \mid \bn \in \N_0^d\}$, the basis of $\su(1,d)$ acts as follows.

\begin{lemma}
For $\bn \in \N_0^d$,
\[
\begin{aligned}
	\pi^\sigma(H_i)e_{\bn} &= \Big(\frac{\sigma}{d+1} + n_i\Big)\,e_{\bn},&& i=1,\ldots d,\\
	\pi^\sigma(E_{0,j})e_{\bn} &= \sqrt{(\sigma+|\bn|-1)n_j}\,e_{\bn-\bv_j},&& j=1,\ldots,d,\\
	\pi^\sigma(E_{i,0})e_{\bn} &= -\sqrt{(n_i+1)(\sigma+|\bn|)}\,e_{\bn+\bv_i}, &&i=1,\ldots,d,\\
	\pi^\sigma(E_{i,j})e_{\bn} &= \sqrt{(n_i+1)n_j}\,e_{\bn+\bv_i-\bv_j}, && 1 \leq i,j\leq d,\ i\neq j,
\end{aligned}
\]
where $\bv_i$ is the standard basis vector of $\C^{d+1}$ with $j$\textsuperscript{th} entry 1 and the other entries are 0, and we use the convention $e_{\bn}=0$ if $n_i=-1$ for some $1\leq i \leq d$.
\end{lemma}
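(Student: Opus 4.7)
The plan is to apply the standard differentiation formula
\[
\pi^\sigma(X)f = \frac{d}{dt}\bigg|_{t=0} \pi^\sigma(\exp(tX))f
\]
to the monomial $\bz^\bn$ and then convert to the orthonormal basis via $e_\bn = \sqrt{(\sigma)_{|\bn|}/\bn!}\,\bz^\bn$. The basis elements split naturally into two types according to the shape of $\exp(tX)$.

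For $X = H_i$ the exponential is diagonal: one computes $a(\exp(tH_i)) = e^{-t/(d+1)}$, $\bb = \bc = 0$, and $D$ diagonal with entry $e^{td/(d+1)}$ in slot $i$ and $e^{-t/(d+1)}$ elsewhere. Substituting in \eqref{eq:3:rep definition} makes $\pi^\sigma(\exp(tH_i))$ act on $\bz^\bn$ as a scalar; after combining the exponents this scalar collapses to $e^{t(n_i+\sigma/(d+1))}$, and differentiation at $t=0$ gives the first line of the lemma.

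The remaining basis elements $E_{0,j}$, $E_{i,0}$ and $E_{i,j}$ (with $1 \le i,j \le d$, $i \neq j$) all square to zero, so $\exp(tX) = I_{d+1} + tX$, and in each case only a single block of the matrix deviates from the identity. A direct substitution into \eqref{eq:3:rep definition} yields
\[
\pi^\sigma(\exp(tE_{i,0}))\,\bz^\bn = (1 + t z_i)^{-\sigma - |\bn|}\bz^\bn,
\]
while $\pi^\sigma(\exp(tE_{0,j}))$ acts as $\bz^\bn \mapsto \bz^\bn|_{z_j \to z_j + t}$, and $\pi^\sigma(\exp(tE_{i,j}))$ as $\bz^\bn \mapsto \bz^\bn|_{z_j \to z_j + t z_i}$. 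Differentiating at $t=0$ gives $-(\sigma+|\bn|)\bz^{\bn+\bv_i}$, $n_j \bz^{\bn-\bv_j}$, and $n_j \bz^{\bn-\bv_j+\bv_i}$, respectively.

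The last step is to insert the normalization factors $\sqrt{(\sigma)_{|\bn|}/\bn!}$ and its counterpart on the receiving side; the Pochhammer identity $(\sigma)_{|\bn|+1} = (\sigma+|\bn|)(\sigma)_{|\bn|}$ together with the elementary ratios $(\bn \pm \bv_i)!/\bn!$ convert all three formulas into the stated expressions in the basis $\{e_\bn\}$. The only conceptual point worth flagging is that the off-diagonal basis elements $E_{i,j}$ ($i \neq j$) lie in the complexification $\sl(d+1,\C)$ rather than in the real Lie algebra of $\SU(1,d)$. This is not a genuine obstacle, since the right-hand side of \eqref{eq:3:rep definition} is holomorphic in the entries of $g$ on a neighborhood of the identity in $\mathrm{SL}(d+1,\C)$, so one may differentiate with complex parameter $t$ directly; this agrees with the complex-linear extension of the infinitesimal representation. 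Aside from this, the argument is just careful bookkeeping of normalization constants.
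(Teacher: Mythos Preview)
Your proof is correct and follows exactly the paper's approach, which is the single sentence ``This follows directly from computing $\frac{d}{dt}\big|_{t=0}\pi^\sigma(\exp(tX))e_\bn(\bz)$ for $X\in\cB$''; you have merely written out the details of that computation. One small remark: your holomorphic-extension comment actually applies to \emph{every} element of $\cB$ (none of $H_i$, $E_{0,j}$, $E_{i,0}$, $E_{i,j}$ lies in the real form of $\su(1,d)$, since e.g.\ $\exp(tH_i)$ is not unitary for real $t\neq 0$), but the resolution you give is the right one in all cases.
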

\begin{proof}
	This follows directly from computing
	\[
	\frac{d}{dt}\Big|_{t=0} \pi^\si(\exp(tX)) e_\bn(\bz), \qquad X \in \cB. \qedhere
	\]
\end{proof}

We fix a $g \in \SU(1,d)$ as in Theorem \ref{thm:pi(g)=Meixner pol}, i.e.
\[
g = \begin{pmatrix} a & \mathbf{b}^t\\\mathbf{c} & D \end{pmatrix}, \qquad \text{with}\ a,b_i,c_i\neq 0\ \text{for}\ i=1,\ldots,d. 
\]
Furthermore, let $U,\bp,\tilde\bp$ be as in Theorem \ref{thm:pi(g)=Meixner pol}. With this $g$, we define a new basis of $\cA_\alpha$ by 
\[
\tilde{e}_{\bn} = \pi^\sigma(g)e_{\bn},\qquad \bn\in\N_0^d,
\]
and define a corresponding basis of $\su(1,d)$ by 
\[
\widetilde \cB = \{ \widetilde{X} =gXg^{-1} \mid X \in \mathcal B\},
\]
where $\mathcal B$ is the basis given in \eqref{eq:basis su(1,d)}. It immediately follows that the action of $\widetilde \cB$ on $\{ \te_\bn \}$ is given by
\begin{equation} \label{eq:tilde action}
\begin{aligned}
	\pi^\sigma(\widetilde H_i)\te_{\bn} &= \Big(\frac{\sigma}{d+1} + n_i\Big)\,\te_{\bn},&& i=1,\ldots d,\\
	\pi^\sigma(\widetilde E_{0,j}) \te_{\bn} &= \sqrt{(\sigma+|\bn|-1)n_j}\,\te_{\bn-\bv_j},&& j=1,\ldots,d,\\
	\pi^\sigma(\widetilde E_{i,0})\te_{\bn} &= -\sqrt{(n_i+1)(\sigma+|\bn|)}\,\te_{\bn+\bv_i}, &&i=1,\ldots,d,\\
	\pi^\sigma(\widetilde E_{i,j})\te_{\bn} &= \sqrt{(n_i+1)n_j}\,\te_{\bn+\bv_i-\bv_j}, && i,j=1,\ldots,d,\ i\neq j.
\end{aligned}
\end{equation}

We will use the representation of $\su(1,d)$ to derive the difference equations for the Meixner polynomials from \cite[Theorem 4.1]{Il12}. First, we need a few preliminary results.
\begin{lemma} \label{lem:Hk} \*
		\begin{enumerate}[(i)]
		\item For $X \in \cB$, we have $(\widetilde X)^*= \widetilde{X^*}$, i.e.
		\[
		(\widetilde H_i)^*=\widetilde H_i, \quad (\widetilde E_{0,i})^*= -\widetilde E_{i,0}, \quad (\widetilde E_{i,j})^*=\widetilde E_{j,i}.
		\]
		for $i,j=1,\ldots,d,\ i\neq j$. 
		
		\item For $k,l=1,\ldots,d$, $k\neq l$, 
		\[
		\begin{split}
		\widetilde H_k & = \sum_{i=1}^d (|D_{i,k}|^2 + |b_k|^2) H_i + \sum^d_{\substack{i,j=1\\ i\neq j}}D_{i,k}\overline{D_{j,k}}E_{i,j} + b_k\sum_{j=1}^d\overline{D_{j,k}}E_{0,j} - \overline{b_k}\sum_{i=1}^dD_{i,k}E_{i,0},\\
		\widetilde E_{k,l} &= \sum_{i=1}^d (D_{i,k}\overline{D_{i,l}} + b_k\overline{b_l}) H_i + \sum^d_{\substack{i,j=1\\ i\neq j}}D_{i,k}\overline{D_{j,l}}E_{i,j} + b_k\sum_{j=1}^d\overline{D_{j,l}}E_{0,j} - \overline{b_l}\sum_{i=1}^dD_{i,k}E_{i,0}, \\
		\widetilde E_{0,l} &= \sum_{i=1}^d (c_i\overline{D_{i,l}} + a\overline{b_l}) H_i + \sum^d_{\substack{i,j=1\\ i\neq j}}c_i \overline{D_{j,l}}E_{i,j} + a\sum_{j=1}^d\overline{D_{j,l}}E_{0,j} - \overline{b_l}\sum_{i=1}^d c_iE_{i,0}, \\
		\widetilde E_{k,0} &= \sum_{i=1}^d (D_{i,k}\overline{c_i} + b_k\overline{a}) H_i + \sum^d_{\substack{i,j=1\\ i\neq j}}D_{i,k}\overline{c_j}E_{i,j} + b_k\sum_{j=1}^d\overline{c_j}E_{0,j} - \overline{a}\sum_{i=1}^dD_{i,k}E_{i,0}.
		\end{split}
		\]
	\end{enumerate}
\end{lemma}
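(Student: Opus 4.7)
The plan is to reduce both parts to routine matrix computations using the block form of $g$ and of $g^{-1}$ from \eqref{eq:g inverse}.

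For part (i), the key input is that $g \in \SU(1,d)$ is equivalent to $g^\dagger = Jg^{-1}J$, which in turn gives $(g^{-1})^\dagger = JgJ$. For any $X \in \mathfrak{sl}(d+1,\C)$ this yields
\[
(\widetilde X)^* = J(gXg^{-1})^\dagger J = J\,(g^{-1})^\dagger X^\dagger g^\dagger\, J = g(JX^\dagger J)g^{-1} = g X^* g^{-1} = \widetilde{X^*},
\]
after using $J^2 = I_{d+1}$ to cancel the inner and outer factors of $J$. Specializing $X$ to $H_i$, $E_{0,i}$ or $E_{i,j}$ and inserting the $*$-structure \eqref{eq:*-structure} gives the three stated relations.

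For part (ii), the observation is that $gE_{k,l}g^{-1}$ is just the outer product of the $k$-th column of $g$ and the $l$-th row of $g^{-1}$, whose entries can be read off directly from \eqref{eq:g inverse}. The off-diagonal entries of $gXg^{-1}$ are precisely the coefficients of the $E_{i,j}$ ($i\neq j$) in the basis $\cB$, while the diagonal part $\sum_{i=0}^d d_i E_{i,i}$ is converted using $E_{i,i}=H_i+\frac{1}{d+1}I_{d+1}$ ($i\geq 1$) and $E_{0,0}=\frac{1}{d+1}I_{d+1}-\sum_{i\geq 1}H_i$, giving
\[
\sum_{i=0}^d d_i E_{i,i} = \frac{\Tr}{d+1}\, I_{d+1} + \sum_{i=1}^d (d_i - d_0)\, H_i.
\]
For $X \in \{E_{k,l},E_{0,l},E_{k,0}\}$ with $k\neq l$, the matrix $gXg^{-1}$ is traceless so the $I_{d+1}$ term drops out; in the remaining case one uses $\widetilde H_k = gE_{k,k}g^{-1}-\frac{1}{d+1}I_{d+1}$ to absorb the trace. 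Reading off $d_i-d_0$ from the explicit outer products recovers the $H_i$-coefficients in the lemma; for instance, for $\widetilde H_k$ one finds $d_i-d_0 = |D_{i,k}|^2+|b_k|^2$, in agreement with the identity $(D^\dagger D)_{k,k} = 1+|b_k|^2$ from \eqref{eq:identities abcD}.

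In fact one need only carry out this direct computation for $\widetilde H_k$, $\widetilde E_{k,l}$ and $\widetilde E_{0,l}$: the formula for $\widetilde E_{k,0}$ then follows from part (i) together with $E_{k,0}^*=-E_{0,k}$, which gives $\widetilde E_{k,0}=-(\widetilde E_{0,k})^*$, after which applying \eqref{eq:*-structure} termwise to the expression for $\widetilde E_{0,k}$ rearranges it into the form claimed for $\widetilde E_{k,0}$. The main (and only) obstacle is careful bookkeeping of the split between the index $0$ and the indices $1,\ldots,d$ in the basis $\cB$; no new ideas are required.
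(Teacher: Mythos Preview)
Your approach to both parts is essentially the paper's: for (i) the same chain of identities using $J^2=I_{d+1}$ and $g^\dagger=Jg^{-1}J$; for (ii) the same outer-product computation $gE_{k,l}g^{-1}=(\text{column }k\text{ of }g)(\text{row }l\text{ of }g^{-1})$, with your identity $\sum_{i=0}^d d_iE_{i,i}=\tfrac{\Tr}{d+1}I_{d+1}+\sum_{i=1}^d(d_i-d_0)H_i$ doing exactly the same bookkeeping as the paper's system for the $x_i$.

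One caveat on your final shortcut. If you actually carry out $\widetilde E_{k,0}=-(\widetilde E_{0,k})^*$ by applying \eqref{eq:*-structure} termwise to the formula for $\widetilde E_{0,k}$, you obtain the \emph{negative} of the expression displayed in the lemma for $\widetilde E_{k,0}$, not the expression itself. A direct check confirms that the displayed formula is off by a global sign: the $(0,0)$ entry of $gE_{k,0}g^{-1}$ is $b_k\overline a$ (column $k$ of $g$ has $0$-entry $b_k$; row $0$ of $g^{-1}$ has $0$-entry $\overline a$), whereas expanding the displayed formula gives $(0,0)$ entry $-b_k\overline a$. The paper's own argument---formally substituting $l=0$, $b_0\mapsto a$, $D_{i,0}\mapsto c_i$ into the $\widetilde E_{k,l}$ computation---tacitly treats row $0$ of $g^{-1}$ as $(-\overline a,\overline{\bc}^t)$ rather than the correct $(\overline a,-\overline{\bc}^t)$, which is where the sign slips in. So your method is sound, but your assertion that it ``rearranges into the form claimed for $\widetilde E_{k,0}$'' is not; had you executed the computation you would have caught the sign error in the statement.
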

\begin{proof}
	For the first statement we use $\widetilde X = gXg^{-1}$, $J^\dagger=J$ and $I = J^2$, to obtain
	\begin{align*}
		(\widetilde X)^*=J (\widetilde{X})^{\dagger}J &= J (gX g^{-1})^\dagger J=  (Jg^{-1}J)^{\dagger}(JX^\dagger J)(Jg^{\dagger}J).
	\end{align*}	
	Using $g^{-1} = Jg^{\dagger}J$ and $JX^\dagger J=X^*$, it follows that $(\widetilde X)^*=gX^*g^{-1}=\widetilde{X^*}$. 

	The second statement follows from a direct calculation. For $E_{k,l}$ we have
	\[
	\begin{split}
	\widetilde E_{k,l} &= 
	\begin{pmatrix}
			-b_k\overline{b_l} &  b_k\overline{D_{1,l}} & \cdots & b_k \overline{D_{d,l}} \\
			- D_{1,k} \overline{b_l} & D_{1,k}\overline{ D_{1,l}} & \cdots & D_{1,k}  \overline{D_{d,l}} \\
			\vdots & \vdots & \ddots & \vdots\\
			- D_{d,k} \overline{b}_l & D_{d,k} \overline{ D_{1,l}} & \cdots & D_{d,k} \overline{D_{d,l}}
	\end{pmatrix} \\
	&= \sum_{i=1}^d x_i H_i + \sum^d_{\substack{i,j=1\\ i\neq j}}D_{i,k}\overline{D_{j,l}}E_{i,j} + b_k\sum_{j=1}^d\overline{D_{j,l}}E_{0,j} - \overline{b_l}\sum_{i=1}^dD_{i,k}E_{i,0},
\end{split}
\]
where the coefficients $x_i \in \C$ are determined by the equations
\begin{align*}
-\frac{1}{d+1}\sum_{j=1}^d x_j &= -b_k\overline{b_l},\\
x_i -\frac{1}{d+1}\sum_{j=1}^d x_j &= D_{i,k}\overline{D_{i,l}}, \qquad i=1,\ldots,d.
\end{align*}
Note that consistency of these equations follows from the identity $b_k \overline{b_l}+\sum_{i=1}^d D_{i,k} D_{i,l}=0$, see \eqref{eq:identities abcD}. It follows that $x_i = D_{i,k} \overline{D_{i,l}}+b_k\overline{b_l}$.

The results for $\widetilde E_{k,0}$ and $\widetilde E_{0,l}$ follow by interpreting $D_{i,0}$ as $c_i$, and $b_0$ as $a$. The calculation for $\widetilde H_k$ runs along the same lines.
\end{proof}
We are now in a position to derive difference equations for the Meixner polynomials from the action of the Cartan elements $H_k$.
\begin{theorem} \label{thm:difference equations}
For $k=1,\ldots,d$, the Meixner polynomials $M_\m(\bn)=M_\m(\bn;U,\si)$ satisfy
\begin{align*}
	\left| \tfrac{p_0}{p_k} \right|^2 n_k M_{\m}(\bn) 
	=& \left(\sigma + |\m|+\sum_{i=1}^d|U_{k,i}\tilde p_i|^2 m_i\right) M_{\m}(\bn)
	+ \sum^d_{\substack{i,j=1\\ i\neq j}}\overline{U_{k,i}} U_{k,j}|\tilde p_i|^2 m_jM_{\m-\bv_{j}+\bv_{i}}(\bn)\\
	&- \sum_{i=1}^d U_{k,i} m_iM_{\m-\bv_{i}}(\bn)- (\sigma+|\m|)\sum_{i=1}^d \overline{U_{k,i}} |\tilde p_i|^2M_{\m+\bv_{i}}(\bn).
\end{align*}
\end{theorem}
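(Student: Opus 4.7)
The plan is to exploit that $H_k$ is a Cartan element of $\su(1,d)$, so $\pi^\sigma(H_k)e_\bn = (\sigma/(d+1)+n_k)e_\bn$. Conjugating by $g$ and applying to $\te_\bn = \pi^\sigma(g)e_\bn$ immediately gives
\[
\pi^\sigma(\widetilde H_k)\te_\bn = \bigl(\tfrac{\sigma}{d+1}+n_k\bigr)\te_\bn.
\]
Taking the inner product with $e_\m$ and using $(\widetilde H_k)^*=\widetilde H_k$ from Lemma \ref{lem:Hk}(i) to move the operator to the other side, this is equivalent to
\[
\bigl(\tfrac{\sigma}{d+1}+n_k\bigr)\pi^\sigma_{\m,\bn}(g) = \langle \te_\bn,\pi^\sigma(\widetilde H_k)e_\m\rangle.
\]
The first step is then to unpack the right-hand side by expanding $\widetilde H_k$ in the basis $\cB$ via Lemma \ref{lem:Hk}(ii), applying the explicit actions of $H_i,E_{i,j},E_{0,j},E_{i,0}$ on $e_\m$, and pairing each resulting basis vector with $\te_\bn$. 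This converts the identity into a linear combination of the matrix coefficients at $\m$ and at the shifts $\m\pm\bv_i$ and $\m+\bv_i-\bv_j$, whose scalar coefficients are monomials in $a,b_k,c_i,D_{i,k}$ (conjugated because of the second-slot position in $\langle\cdot,\cdot\rangle$) multiplied by the square roots produced by the raising/lowering actions.

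The second step is to substitute the formula $\pi^\sigma_{\m',\bn}(g) = A_{\m'}B\,M_{\m'}(\bn;U,\sigma)$ from Theorem \ref{thm:pi(g)=Meixner pol}, with $A_{\m'}=\sqrt{(\sigma)_{|\m'|}/\m'!}\,(-1)^{|\m'|}\tilde{\mathbf p}^{\m'}$ and $B$ independent of $\m$. The ratios $A_{\m+\bv_i-\bv_j}/A_\m$ and $A_{\m\pm\bv_i}/A_\m$ produce exactly the reciprocal square roots needed to cancel those coming from the $\su(1,d)$ action, so that after dividing by $A_\m B$ only rational coefficients survive. Using the diagonal entry $(D^\dagger D)_{k,k}=1+|b_k|^2$ from \eqref{eq:identities abcD}, the coefficient of $M_\m(\bn;U,\sigma)$ on the right collects a $\sigma/(d+1)$ contribution that precisely cancels the one on the left. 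Dividing the remaining identity through by $|b_k|^2$ and invoking $|b_k|^{-2}=|p_0/p_k|^2$ collapses the left-hand side to $|p_0/p_k|^2 n_k\,M_\m(\bn;U,\sigma)$, as required.

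The final step is to rewrite every surviving coefficient using the crucial identity $U_{k,i}\tilde p_i = D_{i,k}/b_k$, which is immediate from the definitions. Elementary algebra then yields
\[
\tfrac{|D_{i,k}|^2}{|b_k|^2}=|U_{k,i}\tilde p_i|^2,\quad \tfrac{\overline{D_{i,k}}D_{j,k}}{|b_k|^2}\cdot\tfrac{\tilde p_i}{\tilde p_j}=\overline{U_{k,i}}U_{k,j}|\tilde p_i|^2,\quad \tfrac{D_{j,k}a}{b_kc_j}=U_{k,j},
\]
together with a similar formula matching the $M_{\m+\bv_i}(\bn)$ term against $-(\sigma+|\m|)\overline{U_{k,i}}|\tilde p_i|^2$, and these four identifications recover exactly the four groups of terms in the statement. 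The main obstacle, in my estimation, is not conceptual but organizational: one must simultaneously track the sign in $\pi^\sigma(E_{i,0})e_\m=-\sqrt{\cdots}\,e_{\m+\bv_i}$, the $(-1)^{|\m'|}$ contained in $A_{\m'}$, and the conjugation induced by scalars occupying the second slot of the inner product, in order to land on the correct pattern of $U_{k,i}$ versus $\overline{U_{k,i}}$ and of $\bv_i-\bv_j$ versus $\bv_j-\bv_i$ without sign or index errors.
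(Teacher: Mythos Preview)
Your proposal is correct and follows essentially the same approach as the paper: compute $\langle \pi^\sigma(\widetilde H_k)\te_\bn,e_\m\rangle$ in two ways, once via the eigenvalue action \eqref{eq:tilde action} and once by moving $\widetilde H_k$ across the inner product using $(\widetilde H_k)^*=\widetilde H_k$, expanding in the basis $\cB$ via Lemma~\ref{lem:Hk}(ii), and then substituting Theorem~\ref{thm:pi(g)=Meixner pol} together with $\sum_i|D_{i,k}|^2=1+|b_k|^2$. The paper omits the explicit bookkeeping you outline in your second and third steps, but your identity $U_{k,i}\tilde p_i=D_{i,k}/b_k$ and the resulting coefficient identifications are exactly what is needed to finish.
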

\begin{proof}
The result follows from evaluating $\langle \pi^\si(\widetilde H_k) \te_\bn,e_\m\rangle$ in two ways. 

First note that $\langle \te_\bn, e_\m\rangle = \pi_{\m,\bn}(g)$. From the action of $\widetilde H_k$ \eqref{eq:tilde action}, it follows that
\[
\langle \pi^\sigma(\widetilde H_k) \te_\bn,e_\m\rangle = \left( \frac{\si}{d+1}+n_k\right) \pi_{\m,\bn}(g). 
\]
On the other hand, using $\langle \pi^\sigma(X)\te_\bn,e_\m\rangle = \langle \te_\bn,\pi^\sigma(X^*)e_\m\rangle$ and Lemma \ref{lem:Hk}, we obtain
\begin{align*}
	\langle \pi^\si(\widetilde H_k) \te_\bn, e_\m\rangle =& \sum_{i=1}^d(|D_{i,k}|^2 + |b_k|^2)\Big(\frac{\sigma}{d+1}  + m_i\Big)\pi_{\m,\bn}(g)\\
	+& \sum^d_{\substack{i,j=1\\ i\neq j}}\overline{D_{i,k}}D_{j,k}\sqrt{(m_i+1)m_j}\,\pi_{\m-\bv_j+\bv_i,\bn}(g)\\
	+& \overline{b_k}\sum_{j=1}^dD_{j,k} \sqrt{(\sigma+|\m|-1)m_j} \, \pi_{\m-\bv_j,\bn}(g)\\
	+& b_k\sum_{i=1}^d\overline{D_{i,k}}\sqrt{(m_i+1)(\sigma+|\m|)}\,\pi_{\m+\bv_i,\bn}(g).
\end{align*}
So we have a difference equation for the matrix coefficients $\pi_{\m,\bn}(g)$. Expressing the matrix coefficients in terms of Meixner polynomials using Theorem \ref{thm:pi(g)=Meixner pol}, and simplifying the diagonal terms using the identity $\sum_{i=1}^d |D_{i,k}|^2=|b_k|^2+1$, we obtain a difference equation for the Meixner polynomials. 
\end{proof}

We note that we can rewrite the difference equations as
\begin{align*}
	 n_k M_{\m}(\bn) 
	=&\left|\frac{p_k}{p_0}\right|^2 \sum^d_{\substack{i,j=1\\ i\neq j}}\overline{U_{k,i}} U_{k,j}|\tilde p_i|^2 m_j\Big[M_{\m-\bv_{j}+\bv_{i}}(\bn)-M_\m(\bn)\Big]\\
	&- \left|\frac{ p_k}{p_0}\right|^2\sum_{i=1}^d U_{k,i} m_i\Big[M_{\m-\bv_{i}}(\bn)-M_\m(\bn)\Big]\\
	&- \left|\frac{p_k}{p_0}\right|^2\sum_{i=1}^d \overline{U_{k,i}} |\tilde p_i|^2 (\sigma+|\m|)\Big[M_{\m+\bv_{i}}(\bn) -M_\m(\bn)\Big].
\end{align*}
Comparing this with the difference equations from \cite[Theorem 4.1]{Il12}, we see that the result is again very similar; the difference is the occurrence of complex conjugates of appropriate parameters. \\

In the same way as in Theorem \ref{thm:difference equations} we obtain `lowering and raising' relations for the Meixner polynomials from the actions of $E_{k,l}$. 
\begin{theorem}
	For $k,l=0,\ldots,d$, $k \neq l$,
	\begin{align*}
		\left| \tfrac{p_0}{p_l}\right|^2 n_l M_{\m}&(\bn+\bv_k-\bv_l) = \\
		& \left(\sigma + |\m|+\sum_{i=1}^d \overline{ U_{l,i} } U_{k,i} |\tilde p_i|^2 m_i\right) M_{\m}(\bn)
		+ \sum^d_{\substack{i,j=1\\ i\neq j}} \overline{U_{l,i}} U_{k,j}  |\tilde p_i|^2 m_jM_{\m+\bv_{i}-\bv_{j}}(\bn)\\
		&- \sum_{i=1}^d U_{k,i} m_iM_{\m-\bv_{i}}(\bn)- (\sigma+|\m|)\sum_{i=1}^d \overline{U_{l,i}} |\tilde p_i|^2M_{\m+\bv_{i}}(\bn),
	\end{align*}
	where we use the notations $U_{0,i}=1$, $\bv_0=0$ and $n_0=-\sigma-|\bn|$.
\end{theorem}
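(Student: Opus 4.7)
My plan is to mirror the proof of Theorem \ref{thm:difference equations} exactly, replacing the Cartan element $\widetilde H_k$ by the ladder operator $\widetilde E_{k,l}$. Concretely, I will evaluate $\langle \pi^\sigma(\widetilde E_{k,l})\te_\bn, e_\m\rangle$ in two different ways and equate the results; applying Theorem \ref{thm:pi(g)=Meixner pol} to every matrix coefficient then translates the identity into the stated relation for Meixner polynomials.

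For the direct evaluation I use the action formulas \eqref{eq:tilde action}. When $k,l\geq 1$ I obtain $\sqrt{(n_k+1)n_l}\,\pi^\sigma_{\m,\bn+\bv_k-\bv_l}(g)$; the cases $k=0$ and $l=0$ are read off from the $\widetilde E_{0,l}$- and $\widetilde E_{k,0}$-lines of \eqref{eq:tilde action} and give $\sqrt{(\sigma+|\bn|-1)n_l}\,\pi^\sigma_{\m,\bn-\bv_l}(g)$ and $-\sqrt{(n_k+1)(\sigma+|\bn|)}\,\pi^\sigma_{\m,\bn+\bv_k}(g)$ respectively. The conventions $\bv_0=0$, $n_0=-\sigma-|\bn|$ and $U_{0,i}=1$ declared in the theorem are exactly what is needed to merge these three sub-cases into a single formula at the end.

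For the adjoint evaluation I use $(\widetilde E_{k,l})^*=\widetilde E_{l,k}$ from Lemma \ref{lem:Hk}(i) to rewrite
\[
\langle \pi^\sigma(\widetilde E_{k,l})\te_\bn, e_\m\rangle = \langle \te_\bn, \pi^\sigma(\widetilde E_{l,k}) e_\m\rangle,
\]
then expand $\widetilde E_{l,k}$ in the basis $\cB$ via Lemma \ref{lem:Hk}(ii) (with $k$ and $l$ interchanged), apply the explicit $\cB$-action on $\{e_\m\}$ recorded in the first lemma of Section \ref{sec:Lie algebra}, and pair with $\te_\bn$. Each resulting basis vector $e_{\m'}$ becomes the matrix coefficient $\pi^\sigma_{\m',\bn}(g)$ with $\m'\in\{\m,\,\m+\bv_i-\bv_j,\,\m-\bv_j,\,\m+\bv_i\}$; because the Bergman inner product is sesquilinear in its second slot, the expansion coefficients of Lemma \ref{lem:Hk}(ii) enter conjugated, which is precisely what produces the $\overline{U_{l,i}}$-factors in the statement.

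Equating the two evaluations, substituting Theorem \ref{thm:pi(g)=Meixner pol} into every $\pi^\sigma_{\m',\bn'}(g)$, and dividing through by $\overline{b_l}b_k$ times the $M_\m(\bn)$-prefactor of $\pi^\sigma_{\m,\bn}(g)$, the Pochhammer and factorial ratios produced by the shifts in $\m$ exactly cancel the square roots $\sqrt{(m_i+1)(\sigma+|\m|)}$, $\sqrt{(\sigma+|\m|-1)m_j}$ and $\sqrt{(m_i+1)m_j}$ coming from the Lie-algebra action. The matrix-entry ratios $U_{k,j}=aD_{j,k}/(b_kc_j)$, $p_k=b_k/a$ and $\tilde p_j=c_j/a$, together with their complex conjugates, then reassemble the numerical coefficients into exactly the combinations $\overline{U_{l,i}}U_{k,j}|\tilde p_i|^2$, $U_{k,i}$ and $\overline{U_{l,i}}|\tilde p_i|^2$ appearing on the right-hand side, while on the left the shift $\bn\mapsto\bn+\bv_k-\bv_l$ contributes a residual $\sqrt{n_l/(n_k+1)}\cdot p_k/p_l$ which, combined with $\sqrt{(n_k+1)n_l}$ and divided by $\overline{b_l}b_k$, collapses to $n_l/|b_l|^2=|p_0/p_l|^2 n_l$. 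The main obstacle is this careful bookkeeping of square-root and conjugation factors; a secondary subtlety is using the identity $(D^\dagger D)_{l,k}=\overline{b_l}b_k$ for $k\neq l$ (the off-diagonal entry of $D^\dagger D=I_d+\overline{\bb}\bb^t$ from \eqref{eq:identities abcD}) to collapse the diagonal $H_i$-contribution to $\overline{b_l}b_k(\sigma+|\m|)+\sum_i\overline{D_{i,l}}D_{i,k}m_i$, which after normalization by $\overline{b_l}b_k$ gives the stated coefficient $\sigma+|\m|+\sum_i\overline{U_{l,i}}U_{k,i}|\tilde p_i|^2 m_i$ of $M_\m(\bn)$. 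Beyond this accounting, the argument is structurally identical to that of Theorem \ref{thm:difference equations}.
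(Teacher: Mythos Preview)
Your approach is correct and is precisely what the paper intends: it states explicitly that these relations are obtained ``in the same way as in Theorem \ref{thm:difference equations}'' from the action of the $\widetilde E_{k,l}$, and gives no further details. Your two-sided evaluation of $\langle \pi^\sigma(\widetilde E_{k,l})\te_\bn, e_\m\rangle$, the use of Lemma \ref{lem:Hk}(ii) with $k,l$ swapped, the conjugation of coefficients coming from the second slot of the inner product, and the collapse of the diagonal term via $(D^\dagger D)_{l,k}=\overline{b_l}b_k$ from \eqref{eq:identities abcD} are all exactly right. One small caveat: the identity $(\widetilde E_{k,l})^*=\widetilde E_{l,k}$ holds only for $k,l\geq 1$; by Lemma \ref{lem:Hk}(i) one has $(\widetilde E_{0,l})^*=-\widetilde E_{l,0}$ and $(\widetilde E_{k,0})^*=-\widetilde E_{0,k}$, so the boundary cases carry an extra sign on the adjoint side as well as on the direct side --- these signs are what make the conventions $n_0=-\sigma-|\bn|$, $U_{0,i}=1$, $\bv_0=0$ absorb everything uniformly.
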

Using the duality property of the multivariate Meixner polynomials $M_\m(\bn)$, Theorem \ref{thm:properties Meixner}(iii), it follows that they also satisfy difference equations and lowering/raising relations in the variable $\m$.
	
\section{Degenerate multivariate Meixner polynomials} \label{sec:degenerate Meixner}
So far we considered matrix coefficients $\pi_{\m,\bn}(g)$ where $g = \left(\begin{smallmatrix} a& \bb^t \\ \bc & D \end{smallmatrix} \right) \in \SU(1,d)$ with $a,b_i,c_i \neq 0$. In this section, we consider the degenerate case in which the vectors $\bb$ and $\bc$ may contain elements equal to 0. For convenience we assume 
\[
b_{k+1} = \ldots = b_d =0 \qquad \text{and} \qquad c_{l+1} = \ldots = c_d=0
\]
for some $k,l \in \{1,\ldots,d\}$ and the other elements of $\bb$ and $\bc$ are nonzero. In the non-degenerate case we have an explicit expression for the matrix coefficients using the hypergeometric expression for the multivariate Meixner polynomials. By taking limits we obtain an explicit expression for $\pi_{\m,\bn}^\sigma(g)$.

\begin{lemma} \label{lem:degenerate pi(g)}
	The matrix coefficient $\pi_{\m,\bn}^\sigma(g)$ is given by
	\[
	\begin{split}
		\pi_{\m,\bn}^\sigma(g) &= \sqrt{\frac{(\si)_{|\m|} (\si)_{|\bn|}}{\m!\,\bn!} } (-1)^{|\m|} a^{-\sigma-|\m|-|\bn|} \prod_{i=1}^k \prod_{j=1}^l b_i^{n_i}  c_j^{m_j} \\
		& \quad \times \sum_{(a_{i,j}) \in \M_{k,l}(\m,\bn)} \frac{ \prod_{j=1}^d (-m_j)_{\sum_{i=1}^d a_{i,j}} \prod_{i=1}^d (-n_i)_{\sum_{j=1}^d a_{i,j}}}{ (\si)_{\sum_{i,j=1}^d a_{i,j}}} \prod_{i,j=1}^d \frac{1}{a_{i,j}!} \\
		& \qquad \qquad  \times \prod_{j=1}^l \left(\prod_{i=1}^k  \left( 1- \tfrac{ a D_{j,i}}{b_i c_j}\right)^{a_{i,j}} \prod_{i=k+1}^d \left( - \tfrac{ aD_{j,i} }{c_j}\right)^{a_{i,j}} \right) \\
		&  \qquad \qquad \times  \prod_{j=l+1}^d \left(\prod_{i=1}^k \left( - \tfrac{aD_{j,i}}{b_i} \right)^{a_{i,j}} \prod_{i=k+1}^d (-a D_{j,i})^{a_{i,j}} \right),
	\end{split}
	\]
	where 
	\[
	\begin{split}
		\M_{k,l}(\m,\bn) = \Big\{ (a_{i,j}) \in \M_d(\N_0) \,:& \, \sum_{j=1}^d a_{i,j} = n_i\ \text{for}\ i=k+1,\ldots d \\ &\text{and} \qquad \sum_{i=1}^d a_{i,j} = m_j \ \text{for}\ j=l+1,\ldots,d \Big\}.
	\end{split}
	\]
\end{lemma}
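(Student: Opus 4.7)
My plan is to derive the formula as a termwise limit of the non-degenerate expression from Theorem \ref{thm:pi(g)=Meixner pol}. Since $\pi^\sigma$ is a strongly continuous unitary representation on $\cA_\alpha$, each matrix coefficient $g \mapsto \pi_{\m,\bn}^\sigma(g) = \langle \pi^\sigma(g)e_\bn, e_\m\rangle$ is continuous in $g \in \SU(1,d)$. The non-degenerate locus $\{g \in \SU(1,d) : a(g), b_1(g),\ldots,b_d(g),c_1(g),\ldots,c_d(g) \neq 0\}$ is the complement of a finite union of proper analytic subvarieties, hence open and dense; I pick a sequence $g_\varepsilon$ in this locus with $g_\varepsilon \to g$ and compute $\pi^\sigma_{\m,\bn}(g) = \lim_{\varepsilon \to 0} \pi^\sigma_{\m,\bn}(g_\varepsilon)$.

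For each $g_\varepsilon$, combining Theorem \ref{thm:pi(g)=Meixner pol} with the hypergeometric expression \eqref{eq:hypergeometric Meixner} yields
\[
\begin{split}
\pi_{\m,\bn}^\sigma(g_\varepsilon) &= \sqrt{\tfrac{(\si)_{|\m|}(\si)_{|\bn|}}{\m!\,\bn!}} (-1)^{|\m|} a_\varepsilon^{-\sigma-|\m|-|\bn|} \prod_{i=1}^d b_{i,\varepsilon}^{n_i} c_{i,\varepsilon}^{m_i} \\
&\quad \times \sum_{(a_{i,j})} \frac{\prod_j (-m_j)_{\sum_i a_{i,j}} \prod_i (-n_i)_{\sum_j a_{i,j}}}{(\sigma)_{\sum_{i,j}a_{i,j}} \prod_{i,j} a_{i,j}!} \prod_{i,j} (1 - U_{i,j,\varepsilon})^{a_{i,j}},
\end{split}
\]
with $U_{i,j,\varepsilon} = a_\varepsilon D_{j,i,\varepsilon}/(b_{i,\varepsilon}c_{j,\varepsilon})$. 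The sum is finite because the Pochhammer factors $(-n_i)_{\sum_j a_{i,j}}$ and $(-m_j)_{\sum_i a_{i,j}}$ terminate it; in particular any non-vanishing summand satisfies $\sum_j a_{i,j} \leq n_i$ and $\sum_i a_{i,j} \leq m_j$.

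The central manipulation is to rewrite the divergent factors using the identity
\[
(bc)^{a_{i,j}}(1 - U_{i,j,\varepsilon})^{a_{i,j}} = \bigl(bc - a_\varepsilon D_{j,i,\varepsilon}\bigr)^{a_{i,j}}, \qquad b=b_{i,\varepsilon},\ c=c_{j,\varepsilon},
\]
which is regular at $b_i=0$ or $c_j=0$. I absorb a factor $c_{j,\varepsilon}^{a_{i,j}}$ into the $(i,j)$-factor for $i\leq k, j>l$, a factor $b_{i,\varepsilon}^{a_{i,j}}$ for $i>k,\,j\leq l$, and $(b_{i,\varepsilon}c_{j,\varepsilon})^{a_{i,j}}$ for $i>k,\,j>l$. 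After rearrangement the prefactor of each summand becomes
\[
a_\varepsilon^{-\sigma-|\m|-|\bn|}\prod_{i=1}^k b_{i,\varepsilon}^{n_i}\prod_{j=1}^l c_{j,\varepsilon}^{m_j} \prod_{i>k} b_{i,\varepsilon}^{\,n_i - \sum_j a_{i,j}} \prod_{j>l} c_{j,\varepsilon}^{\,m_j - \sum_i a_{i,j}},
\]
with the trailing $b_{i,\varepsilon}, c_{j,\varepsilon}$ exponents nonnegative by the Pochhammer termination. The summand therefore vanishes in the limit unless each of these exponents is zero, i.e.\ unless $(a_{i,j}) \in \M_{k,l}(\m,\bn)$. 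For a surviving index, the rearranged bounded factor converges to the product of $(1-aD_{j,i}/(b_ic_j))^{a_{i,j}}$, $(-aD_{j,i}/b_i)^{a_{i,j}}$, $(-aD_{j,i}/c_j)^{a_{i,j}}$ and $(-aD_{j,i})^{a_{i,j}}$ over the four quadrants $\{i\leq k,j\leq l\}$, $\{i\leq k,j>l\}$, $\{i>k,j\leq l\}$ and $\{i>k,j>l\}$, reproducing the stated expression.

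The main obstacle is the bookkeeping of the exponents of $b_{i,\varepsilon}$ and $c_{j,\varepsilon}$ across the four quadrants: one needs to verify that the absorptions described above, aggregated over all $j$ (for fixed $i>k$) and all $i$ (for fixed $j>l$), cancel the prefactor powers $b_{i,\varepsilon}^{n_i}$ and $c_{j,\varepsilon}^{m_j}$ exactly when $(a_{i,j}) \in \M_{k,l}(\m,\bn)$, leaving only the claimed prefactor $a^{-\sigma-|\m|-|\bn|}\prod_{i\leq k} b_i^{n_i}\prod_{j\leq l} c_j^{m_j}$ and no residual $b_{i,\varepsilon}$ or $c_{j,\varepsilon}$ factors. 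Once this accounting is checked, the lemma is an immediate consequence of the continuity of matrix coefficients together with the termwise limit of the finite hypergeometric sum.
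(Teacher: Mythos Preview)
Your proposal is correct and follows essentially the same approach as the paper: both start from the non-degenerate expression of Theorem~\ref{thm:pi(g)=Meixner pol} combined with the hypergeometric series~\eqref{eq:hypergeometric Meixner}, use the Pochhammer termination to make the sum finite, and then take termwise limits as the relevant $b_i,c_j\to 0$, observing that only the terms with $(a_{i,j})\in\M_{k,l}(\m,\bn)$ survive. The only differences are organizational: the paper computes the limits in two stages (first $b_i\to 0$, then $c_j\to 0$) rather than along a single approximating sequence $g_\varepsilon$, and leaves implicit the continuity of matrix coefficients that you make explicit.
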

\begin{proof}
	First, we write out the Meixner polynomial $M_\m(\bn;U,\si)$ from Theorem \ref{thm:pi(g)=Meixner pol}, i.e.~in the non-degenerate case, as a hypergeometric series using \eqref{eq:hypergeometric Meixner}, which is a sum labeled by $(a_{i,j}) \in \M_d(\N_0)$ of the form
	\[
	\pi_{\m,\bn}^\sigma(g) = C \left(\prod_{i,j=1}^d b_i^{n_i} c_j^{m_j} \right) \sum_{(a_{i,j})} B_{a_{i,j}} \prod_{i,j=1}^d \left( 1- \frac{aD_{j,i}}{b_ic_j}\right)^{a_{i,j}},
	\]
	where $C$ and $B_{a_{i,j}}$ are independent of $\bb$ and $\bc$. The factor $B_{a_{i,j}}$ contains
	a term $(-n_i)_{\sum_{j=1}^d a_{i,j}}$, which equals $0$ if $\sum_{j=1}^d a_{i,j}>n_i$, so that the sum is a finite sum labeled by $(a_{i,j}) \in \M_d(\N_0)$ with $\sum_{j=1}^d a_{i,j}\leq n_i$ for $1\leq i \leq d$. For taking the limit $b_i \to 0$ we use
	\[
	\lim_{b_i \to 0}  b_i^{n_i} \prod_{j=1}^d \left( 1- \frac{ aD_{j,i} }{b_i c_j}\right)^{a_{i,j}} = 
	\begin{cases}
		\prod_{j=1}^d \left( -\frac{ aD_{j,i} }{c_j} \right)^{a_{i,j}}, & \text{if}\ \sum_{j=1}^d a_{ij} = n_i,\\
		0, & \text{if}\ \sum_{j=1}^d a_{ij} < n_i.
	\end{cases}
	\]
	Applying this for $i=k+1,\ldots,d$, shows that the sum over matrices $(a_{i,j}) \in \M_d(\N_0)$ reduces to a sum over matrices for which the elements of the $i$\textsuperscript{th} row sum to $n_i$, of the form
	\[
	C' \left(\prod_{j=1}^d  c_j^{m_j} \right) \sum_{(a_{i,j})} B_{a_{i,j}} \prod_{j=1}^d \prod_{i=1}^k\left( 1- \frac{ a D_{j,i} }{b_ic_j}\right)^{a_{i,j}} \prod_{i=k+1}^d  \left( -\frac{ aD_{j,i} }{c_j} \right)^{a_{i,j}},
	\]
	where $C'$ is independent of $\bc$.
	$B_{a_{i,j}}$ also contains a term $(-m_j)_{\sum_{i=1}^d a_{i,j}}$, which equals 0 for $\sum_{i=1}^d a_{i,j}> m_j$. Furthermore, we have
	\[
	\begin{split}
		\lim_{c_j \to 0} c_j^{ m_j } & \prod_{i=1}^k\left( 1- \frac{ a D_{j,i} }{b_ic_j}\right)^{a_{i,j}} \prod_{i=k+1}^d \left(- \frac{a D_{j,i} }{c_j} \right)^{a_{i,j}} \\
		& =
		\begin{cases} 
			\prod_{i=1}^k \left( - \frac{aD_{j,i}}{b_i} \right)^{a_{i,j}} \prod_{i=k+1}^d (-a D_{j,i})^{a_{i,j}},& \text{if}\ \sum_{i=1}^d a_{i,j} = m_j,\\
			0, & \text{if}\ \sum_{i=1}^d a_{i,j} < m_j.
		\end{cases}
	\end{split}
	\]
	Applying this for $j=l+1,\ldots,d$, we are left with a sum over matrices $(a_{i,j})$ for which the elements of the $j$\textsuperscript{th} column sum to $m_j$, of the form
	\[
	\begin{split}
		C'' \sum_{(a_{i,j})}  B_{a_{i,j}} &\prod_{j=1}^l \left(\prod_{i=1}^k  \left( 1- \tfrac{ a D_{j,i}}{b_i c_j}\right)^{a_{i,j}} \prod_{i=k+1}^d \left( - \tfrac{ aD_{j,i} }{c_j}\right)^{a_{i,j}} \right) \\
		\times&    \prod_{j=l+1}^d \left(\prod_{i=1}^k \left( - \tfrac{aD_{j,i}}{b_i} \right)^{a_{i,j}} \prod_{i=k+1}^d (-a D_{j,i})^{a_{i,j}} \right).
	\end{split}
	\]
	Writing out $C''$ and $B_{a_{i,j}}$ explicitly gives the result.
\end{proof}
For $\si>0$ and $U \in \M_d$, we define the degenerate multivariate Meixner polynomials $\hat M_\m(\bn;U,\si)$ by
\[
\begin{split}
	\hat M_\m(\bn;U,\si) &= 
	\sum_{(a_{i,j}) \in \M_{k,l}(\m,\bn)} \frac{ \prod_{j=1}^d (-m_j)_{\sum_{i=1}^d a_{i,j}} \prod_{i=1}^d (-n_i)_{\sum_{j=1}^d a_{i,j}}}{ (\si)_{\sum_{i,j=1}^d a_{i,j}}} \prod_{i,j=1}^d \frac{1}{a_{i,j}!} \\
	& \qquad \qquad  \times \prod_{\substack{1 \leq i \leq k\\ 1 \leq j \leq l}}   \left( 1- U_{i,j}\right)^{a_{i,j}} 
	\prod_{\substack{1 \leq i,j \leq d\\ i \geq k+1 \ \text{or} \ j \geq l+1}} \left( - U_{i,j} \right)^{a_{i,j}}.
\end{split}
\]
Define, similar as in Theorem \ref{thm:pi(g)=Meixner pol}, 
\[
\begin{split}
p_i &= \frac{b_i}{a}, \quad i=1,\ldots,k,\\
\tilde p_j &= \frac{ c_i}{a}, \quad j=1,\ldots,l
\end{split}
\]
and $U \in \M_d$  by
\[
U_{i,j} =
\begin{cases} 
	\frac{ a D_{j,i} }{b_i c_j} & 1 \leq i \leq k,\ 1 \leq j \leq l,\\
	\frac{ aD_{j,i}}{b_i} & 1 \leq i \leq k, \ l+1 \leq j \leq d,\\
	\frac{ aD_{j,i}}{c_j} & k+1 \leq i \leq d,\ 1 \leq j \leq l,\\
	a D_{j,i}  & k+1 \leq i \leq d,\ l+1 \leq j \leq d.
\end{cases}
\]
It is convenient to define $\bp,\tilde \bp \in \C^d$ by
\[
\begin{split}
	\bp=(p_1,\ldots,p_k,\tfrac1a,\ldots,\tfrac1a), \qquad 
	\tilde \bp  = (\tilde p_1,\ldots,\tilde p_l,\tfrac1a,\ldots,\tfrac1a).
\end{split}
\]
Note that we now have $D_{j,i} = a p_i \tilde p_j U_{i,j}$ for $1 \leq i,j \leq d$. Then, it follows from Lemma \ref{lem:degenerate pi(g)} that $\pi_{\m,\bn}^\sigma(g)$ is a multiple of $\hat M_{\m}(\bn;U,\si)$,
\[
\pi_{\m,\bn}^\sigma(g) = \sqrt{\frac{(\si)_{|\m|} (\si)_{|\bn|}}{\m!\,\bn!} } (-1)^{|\m|} a^{-\sigma} \tilde{\bp}^{\m}\bp^{\bn}  \hat M_\m(\bn;U,\si).
\]

Properties of the matrix coefficients can easily be translated to properties of the degenerate Meixner polynomials, similar as in the previous sections. We will state the orthogonality relations, the generating function, the duality property and difference equations, and leave the other properties to the interested reader. First we need to introduce some notations, similarly to the notations used at the end of Section \ref{sec:Meixner polynomials}. Given $U \in \M_d$ and $p_i,p_j \in \C$ for $i=1,\ldots,k$, $j=1,\ldots,l$, such that $1-\sum_{i=1}^k |p_i|^2 = 1- \sum_{i=1}^l |\tilde p_i|^2$,  we denote 
\[
\hat U= \begin{pmatrix} 1 & \mathbf{1}^t_l \\ \mathbf{1}_k& U \end{pmatrix},
\]
where $\mathbf{1}_k$ is the vector $\mathbf u$ with $u_i=1$ for $i=1,\ldots,k$ and $u_i=0$ for $i=k+1,\ldots,d$, 
and
\[
\begin{split}
	C &=\diag{1,-|p_1|^2,\ldots,-|p_k|^2,-|p_0|^2,\ldots,-|p_0|^2},\\
	\widetilde C&=\diag{1,-|\tilde p_1|^2,\ldots,-|\tilde p_l|^2,-|p_0|^2,\ldots,-|p_0|^2},
\end{split}
\]
where $|p_0|^2 = 1-\sum_{i=1}^k |p_i|^2$. 
\begin{theorem} Let $\si \in \N_{\geq d+1}$, $p_i,\tilde p_j \in \C$ for $i=1,\ldots,k$, $j=1,\ldots,l$ and $\hat U \in \M_{d}$ such that $\hat U^\dagger C \hat U \widetilde C = |p_0|^2 I_{d+1}$. The polynomials $\hat M_\m(\bn)=\hat M_{\m}(\bn;U,\si)$ have the following properties:
	\begin{enumerate}[(i)]
		\item Orthogonality relations:
		\[
		\begin{split}
			\sum_{\bn \in \N_0^d}   \frac{ (\si)_{|\bn|}}{\bn!}  \bp^\bn \overline{\bp}^\bn \, \hat M_\m(\bn) \overline{\hat M_{\m'}(\bn)}  = \delta_{\m,\m'}\frac{\m!\tilde \bp^{-\m} \overline{\tilde \bp}^{-\m}}{(\sigma)_{|\m|}|p_0|^{2\sigma}}, \\
			\sum_{\m \in \N_0^d}   \frac{ (\si)_{|\m|}}{\m!}  \tilde \bp^{\m} \overline{\tilde \bp}^{\m} \, \hat M_\m(\bn) \overline{\hat M_{\m}(\bn')} = \delta_{\bn,\bn'}\frac{\bn!\tilde \bp^{-\bn} \overline{\tilde \bp}^{-\bn}}{(\sigma)_{|\bn|}|p_0|^{2\sigma}}.
		\end{split}
		\]
		\item Generating function:
		\[
		\begin{split}
			\sum_{\m \in \N_0^d} & \frac{ (\si)_{|\m|}}{\m!} \hat M_\m(\bn;U,\si) \mathbf t^\m \\
			&= \left(1-\sum_{j=1}^l t_j\right)^{-\si-|\bn|} \prod_{i=1}^k \left( 1 - \sum_{j=1}^d U_{i,j} t_j \right)^{n_i} \, \prod_{i=k+1}^d \left(- \sum_{j=1}^d U_{i,j} t_j \right)^{n_i}.
		\end{split}
		\]
		\item Duality: $\hat M_\m(\bn;U,\si) = \hat M_{\bn}(\m;U^t,\si)$.
		\item Difference equations: for $s =1,\ldots,d$,
		\begin{align*}
			\left| \tfrac{p_0}{p_s}  \right|^2 &n_s \hat M_{\m}(\bn) = \\
			& \left(\chi_k(s)(\sigma + |\m|)+\sum_{i=1}^d|U_{s,i}\tilde p_i|^2 m_i\right) \hat M_{\m}(\bn)
			+  \sum^d_{\substack{i,j=1\\ i\neq j}}\overline{U_{s,i}} U_{s,j}|\tilde p_i|^2 m_jM_{\m-\bv_{j}+\bv_{i}}(\bn)\\
			&- \chi_{k}(s) \left(\sum_{i=1}^d U_{s,i} m_i\hat M_{\m-\bv_{i}}(\bn)+ (\sigma+|\m|)\sum_{i=1}^d \overline{U_{s,i}} |\tilde p_i|^2\hat M_{\m+\bv_{i}}(\bn) \right).
		\end{align*}
		where 
		\[
		\chi_{k}(s) = 
		\begin{cases} 
			1, & s \leq k,\\
			0, & s \geq k+1.
		\end{cases}
		\]
	\end{enumerate}
\end{theorem}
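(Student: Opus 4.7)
The approach is to extend the representation-theoretic arguments of Sections~\ref{sec:Meixner polynomials} and~\ref{sec:Lie algebra} to the degenerate setting. The crux is to recognize that the parameter condition $\hat U^\dagger C \hat U \widetilde C = |p_0|^2 I_{d+1}$ encodes exactly the defining equation $g^\dagger J g = J$ for a degenerate element $g\in\SU(1,d)$ recovered via the factorization $g=a\widetilde P \hat U^t P$ with the degenerate $P,\widetilde P$ written just before this theorem. Once such a $g$ is in hand, Lemma~\ref{lem:degenerate pi(g)} gives
\[
\pi_{\m,\bn}^\sigma(g) = \sqrt{\tfrac{(\si)_{|\m|} (\si)_{|\bn|}}{\m!\,\bn!} } (-1)^{|\m|} a^{-\sigma} \tilde{\bp}^{\m}\bp^{\bn}  \hat M_\m(\bn;U,\si),
\]
and every statement about $\pi^\si(g)$ translates into a statement about $\hat M_\m$.

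First I would prove (ii): evaluate $\pi^\si(g)\bz^\bn$ directly from \eqref{eq:3:rep definition} with the degenerate $g$, factor out $a^{-\si-|\bn|}\prod_{i=1}^k b_i^{n_i}$, and substitute $t_j=-c_jz_j/a$ for $j\leq l$, $t_j=-z_j/a$ for $j>l$. Matching the resulting expansion against \eqref{eq:matrix coefficients} and the formula above produces the asserted generating function. Parts (i) and (iii) then follow verbatim from the arguments of Theorem~\ref{thm:properties Meixner}: unitarity yields $\sum_\bk \pi^\si_{\m,\bk}(g)\overline{\pi^\si_{\bn,\bk}(g)}=\delta_{\m,\bn}$, which rewrites as the first orthogonality; the identity $\pi_{\m,\bn}^\sigma(g)=\overline{\pi_{\bn,\m}^\si(g^{-1})}$ combined with the observation that $g^{-1}$ is again degenerate (with the roles of $k,l$ and of $\bp,\tilde\bp$ swapped and $D\mapsto D^\dagger$, so that $U\mapsto U^t$ up to a conjugation that cancels because $\hat M$ is a polynomial with real coefficients in the $U_{i,j}$) yields the duality $\hat M_\m(\bn;U,\si)=\hat M_\bn(\m;U^t,\si)$; and the second orthogonality then follows from the first and duality.

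For (iv) I would repeat the proof of Theorem~\ref{thm:difference equations}, evaluating $\langle \pi^\si(\widetilde H_s)\te_\bn, e_\m\rangle$ in two ways with $\widetilde H_s = gH_sg^{-1}$. The direct action on $\te_\bn$ gives $(\tfrac{\si}{d+1}+n_s)\pi^\si_{\m,\bn}(g)$. The second evaluation expands $\widetilde H_s$ using Lemma~\ref{lem:Hk}(ii); crucially, when $s>k$ we have $b_s=0$, so the terms $b_s\sum_j\overline{D_{j,s}}E_{0,j}$ and $-\overline{b_s}\sum_i D_{i,s}E_{i,0}$ disappear, which is exactly the source of the factor $\chi_k(s)$ that suppresses the $\hat M_{\m\pm\bv_i}$ contributions. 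Translating matrix coefficients to $\hat M$'s via the formula above produces the stated difference equations.

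The main obstacle will be the bookkeeping in (ii), since the prefactors $b_i^{n_i}$ survive only for $i\leq k$ whereas the corresponding binomial factors $(-\sum_j U_{i,j}t_j)^{n_i}$ in the generating function are present for all $i$; one must carefully track which factors absorb into $a^{-\si}\tilde\bp^\m\bp^\bn$ and which remain as genuine generating-function variables. Once the generating function is cleanly matched, the remaining items follow essentially mechanically from the representation-theoretic machinery already developed in the non-degenerate setting.
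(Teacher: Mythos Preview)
Your proposal is correct and follows essentially the same route as the paper: reconstruct a degenerate $g\in\SU(1,d)$ from the parameter condition via $g=a\widetilde P\hat U^t P$, invoke Lemma~\ref{lem:degenerate pi(g)} to identify $\pi^\si_{\m,\bn}(g)$ with $\hat M_\m$, then rerun the arguments of Theorems~\ref{thm:properties Meixner} and~\ref{thm:difference equations}. The paper's proof says exactly this, in fact more tersely than you do; your identification of $b_s=0$ as the source of $\chi_k(s)$ in part (iv) is the right mechanism (note it also accounts for the $\chi_k(s)(\sigma+|\m|)$ in the diagonal once the simplification via $\sum_i |D_{i,s}|^2=|b_s|^2+1$ is carried through).
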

\begin{proof}
	The orthogonality relations are orthogonality relations for the matrix coefficients $\pi_{\m,\bn}(g)$. In this case $g = a \widetilde P \hat U^t P$ with $\widetilde P= \diag{1,\tilde \bp}$ and $P = \diag{1,\bp}$, where $p_i,\tilde p_i$ and $U_{i,j}$ are related to $g$ as described above. 
	Note that $\tilde C = \widetilde P^\dagger J \widetilde P$ and $C = P^\dagger J P$. The identity $g^\dagger J g = J$ then leads to the condition $\hat U^\dagger C \hat U \widetilde C = |p_0|^2 I_{d+1}$. The generating function follows from writing out 
	\[
	\pi^\sigma(g) e_\bn(\bz) = \sum_{\m} \pi_{\m,\bn}^\sigma(g) e_{\m}(\bz)
	\]
	in terms of the degenerate Meixner polynomials  and setting 
	\[
	t_i = 
	\begin{cases}
		-\frac{c_i z_i}{a}, & \text{for}\ i=1,\ldots,l,\\
		-\frac{z_i}{a}, & \text{for}\ i=l+1,\ldots,d.
	\end{cases} 
	\]
	The duality property and the difference equations are obtained in the same way as in Theorems \ref{thm:properties Meixner} and \ref{thm:difference equations}.
\end{proof}


\begin{thebibliography}{99}

\bibitem{AAR} G.E. Andrews, R. Askey, R. Roy, \textit{Special Functions}, Encycl. Math. Appl. 71, Cambridge Univ. Press, 1999.	
	
\bibitem{GGLV} J.~Gaboriaud, V.X.~Genest, J.~Lemieux, L.~Vinet, \textit{A superintegrable discrete oscillator and two-variable Meixner polynomials}, J.~Phys.~A 48 (2015), no.~41, 415202, 14 pp.

\bibitem{GMVZ} V.X.~Genest, J.~Miki, L.~Vinet, A.~Zhedanov, \textit{The multivariate Meixner polynomials as matrix elements of SO$(d,1)$ representations on oscillator states}, 
J.~Phys.~A 47 (2014), no.~4, 045207, 18 pp.

\bibitem{Gr} R.C.~Griffiths, \textit{Orthogonal polynomials on the multinomial distribution}. Austral.~J.~Statist.~13 (1971), 27--35. 

\bibitem{Gr75} R.C.~Griffiths, \textit{Orthogonal polynomials on the negative multinomial distribution}, J. Multivariate Anal.~5 (1975), 271--277.

\bibitem{Il} P.~Iliev, \textit{A Lie-theoretic interpretation of multivariate hypergeometric polynomials}, Compos.~Math.~148 (2012), no.3, 991--1002.

\bibitem{Il12} P.~Iliev, \textit{Meixner polynomials in several variables satisfying bispectral difference equations}. Adv.~in Appl.~Math.~49 (2012), no.1, 15--23.

\bibitem{Kn} A.W.~Knapp, \textit{Representation theory of semisimple groups. 
An overview based on examples}, Princeton Math.~Ser., 36, Princeton University Press, Princeton, NJ, 1986.

\bibitem{Mei} J.~Meixner, \textit{Orthogonale Polynomsysteme Mit Einer Besonderen Gestalt Der Erzeugenden Funktion}, J.~London Math.~Soc.~9 (1934), no.1, 6--13.

\bibitem{VK} N.~Ja.~Vilenkin, A.U.~Klimyk, \textit{Representation of Lie groups and special functions}, Vol.~1, 
Kluwer Academic Publishers Group, Dordrecht, 1991.

\bibitem{Zhu} K.~Zhu, \textit{Spaces of holomorphic functions in the unit ball}. Graduate Texts in Mathematics, 226. Springer-Verlag, New York, 2005. ISBN: 0-387-22036-4

\end{thebibliography}
\end{document}